\let \Re \relax
\DeclareMathOperator{\Re}{Re}
\let \Im \relax
\DeclareMathOperator{\Im}{Im}
\newcommand{\Con}{\ensuremath{\mathscr C}}
\newcommand{\Cinf}{\ensuremath{\Con^\infty}}
\renewcommand{\S}{\ensuremath{\mathscr S}}
\DeclareMathOperator{\supp}{supp}
\newcommand{\mb}[1]{\ensuremath{\mathbb{#1}}}
\newcommand{\N}{{\mb{N}}}
\newcommand{\R}{{\mb{R}}}
\renewcommand{\d}{\ensuremath{\partial}}
\DeclareMathOperator{\Op}{Op}
\DeclareMathOperator{\Div}{div}
\newcommand{\nhd}{neighborhood\xspace}
\def\keywords{
    \vspace{1ex}
    \noindent
    \if@twocolumn
      \small{\bf  Keywords}\/---$\!$    \else
      \begin{center}\small\ {\bf Keywords}\end{center}\quotation\small
    \fi}
\def\endkeywords{\vspace{0.6em}\par\if@twocolumn\else\endquotation\fi
    \normalsize\rm}
\newtheorem{theorem}{Theorem}[section]
\newtheorem{lemma}{Lemma}[section]
\newtheorem{remark}{Remark}[section]
\newtheorem{definition}{Definition}[section]
\begin{document}

\date{\today}
\title{Logarithmic Decay of a Wave Equation with Kelvin-Voigt Damping
\thanks{
This work was supported by the National Natural Science Foundation of China (grants No. 60974033) and Beijing Municipal Natural Science Foundation (grant No. 4182059).
\medskip} }

\author{Luc Robbiano
 \thanks{Laboratoire de Math\'ematiques Appliqu\'ees, UMR 8100 du CNRS, Universit\'e Paris--Saclay (site UVSQ),
45 avenue des Etats Unis, 78035 Versailles Cedex, France (luc.robbiano@uvsq.fr)}
 and Qiong Zhang
 \thanks{
School of Mathematics and Statistics, Beijing Key Laboratory on MCAACI, Beijing Institute of Technology, Beijing, 100081, China
(zhangqiong@bit.edu.cn).}
\thanks{Corresponding author.}}


\maketitle

\begin{abstract}
In this paper we analyze   the long time behavior of a wave equation with local Kelvin-Voigt Damping.
Through  introducing proper class
symbol and pseudo-differential calculus, we obtain a Carleman estimate, and then
establish an estimate on the corresponding resolvent operator. As a result,
we show the logarithmic decay rate for energy of the system without
any geometric assumption on the subdomain on which the damping is effective.

\vskip 4mm

\noindent
{\bf keywords.}
   Carleman estimate; wave equation; Kelvin-Voigt damping; logarithmic stability

\vskip 4mm

\noindent
  {\bfseries AMS 2010 subject classification:}
Primary 93B05; Secondary 93B07, 35B37

\end{abstract}


%
%

\section{Introduction}

In this paper, we consider a wave equation with local Kelvin-Voigt damping
and analyze long time behaviour for the -solution of the system.
Let $\Omega\subset\mathbb{R}^d $ be a bounded domain with smooth boundary $\Gamma = \partial \Omega$.
  Denote by   $\partial_{n}$  the unit outward normal vector  on boundary $\Gamma.$ The PDE model is as follows.
 \begin{equation}\label{system}
\left\{
\begin{array}{lcl}
  y_{tt}(t,x) -   \Div\, [\nabla y (t,x)+ a(x) \nabla    y_{t}(t,x)]  = 0  & \mbox{ in } & (0, \infty) \times \Omega, \\ \noalign{\medskip}
 y(t,x) = 0& \mbox{ on } & (0, \infty) \times \Gamma, \\ \noalign{\medskip}
  y(0,x)  = y^0, \quad   y_{t}(0,x)  = y^1 & \mbox{ in } & \Omega,
\end{array}
\right.
\end{equation}
where the coefficient function $a(\cdot)\in L^1( \Omega)$~
 is   nonnegative  and not identically null.

The natural energy of system (\ref{system}) is
\begin{equation}
 \label{energy}
E(t)=\frac{1}{2}
\Big[\int_{\Omega}\big(\,|\,\nabla y(t)\,|\,^2+\,|\,  y_t(t)\,|\,^2\big) dx  \Big].
 \end{equation}

\noindent
A direct computation gives that
\begin{equation}\label{energydiss}
{{d}\over{dt}}E(t) = -\int_{\supp a} a(x)\,|\,\nabla
y_t(t) \,|\,^2 dx.
\end{equation}
Formula (\ref{energydiss}) shows that the only dissipative mechanism acting on the system is the viscoelastic
damping $\Div\,[a\nabla y_t] $, which is only effective on $\supp a$.

To rewrite the system as an evolution equation, we set the energy space as
\begin{equation}
\label{space}
 {\cal{H}} =
   H^1_0(\Omega) \times L^2(\Omega),
\end{equation}
with norm
\begin{equation}
\label{norm}
 \|Y\|_{\cal{H}}  =   \sqrt{\|\nabla y_1\|_{L^2(\Omega)}^2
 +\| y_2\|_{L^2(\Omega)}^2 },\;\;\;\;
   \forall \;Y = \big(y_1,y_2\big) \in \cal{H}.
 \end{equation}

\noindent
Define an unbounded operator ${\cal A}: D({\cal A}) \subset
{\cal{H}}\rightarrow {\cal{H}}$ by
$$
{\cal A} Y=  \big(y_2,\Div\,(\nabla y_1 + a \nabla y_2) \big),  \quad \forall\; Y
= \big(y_1,y_2\big) \in D({\cal A}),
$$
and
$$
{\cal D}({\cal A} )  =
 \Big\{\big(y_1,y_2\big)\in {\cal{H}} \: : \:
  y_2 \in H^1_0(\Omega) ,
 \Div\,(\nabla y_1 + a \nabla y_2) \big)\in L^2(\Omega) \Big\}.
$$

\noindent Let $Y(t) = (y(t),\, y_t(t))$.  Then system (\ref{system}) can be written as
\begin{equation}
\label{system1}
{{d}\over{dt}} Y(t)  ={\cal A}Y(t)
, \quad \forall \;t> 0, \quad    Y(0) = (y^0,y^1).
\end{equation}
%

\indent
It is known from  \cite{liurao} that if   $\supp a $ is non-empty, the operator  ${\cal A}$ generates a contractive
 $C_0$ semigroup $e^{t{\cal A}}$ on ${\cal H}$ and $i{\mathbb R}\subset\rho({\cal A})  $, the resolvent of ${\cal A}$.
 Consequently,  the semigroup $e^{t{\cal A}}$ is strongly stable.
 Moreover, if the entire medium is of the viscoelastic type (i.e. $\supp a = \overline{\Omega}$),  the
damping for the wave equation not only induces exponential energy
decay, but also restricts the spectrum of the associated semigroup
generator to a sector in the left half plane, and the associated
semigroup is analytic (\cite{huang1}).
When the Kelvin-Voigt damping is localized on a subdomain of $\Omega$, the properties of system is quite complicated. 
First, it has been proved that properties of  regularity and stability of 1-d system \eqref{system}
depend  on the continuousness of  coefficient function $a(\cdot)$.
More precisely, assume that $\Omega = (-1,\,1)$ and  $a(x)$ behaviours  like $ x^\alpha $ with $\alpha >0 $ in $\supp a= [0,1]$.
Then the solution of system \eqref{system} is eventually differentiable for $\alpha >1$, exponentially stable for $\alpha \ge 1$, polynomially stable of order $\frac{1}{1-\alpha }$ for $0<\alpha <1$, and polynomially stable of optimal decay rate  $2$ for $ \alpha =0 $
 (see \cite{liu-liu-qiong, liu-qiong,renardy, qiong-2010}).
For the higher dimensional system,
the corresponding semigroup  is  exponentially stable when $a(\cdot)\in \Con^2(\Omega)$   and $ \supp a \supset \Gamma$  (\cite{liu-rao}).
However, when the Kelvin-Voigt damping is local
 and the material coefficient $a(\cdot)$ is a positive constant on $\supp a$,   the energy of system (\ref{system}) does not
 decay exponentially for any geometry of $\Omega$ and $\supp a$ (\cite{cll,qiong2017}). The reason is that the strong damping and   non-continuousness of the coefficient function lead to
reflection of waves at the interface $\gamma \doteq \partial (\supp a) \setminus \Gamma$, which then fails to be effectively damped because they do not enter the
region of damping.
It turns out that  the Kelvin-Voigt damping does not follow the principle that
``geometric optics" condition implies exponential stability, which is true for the wave equation with local viscous damping (\cite{bardos}).

Recently,  \cite{qiong2018} proves the polynomial stability of system \eqref{system} when $a(\cdot)\equiv a_0>0$  on $\supp a $ and $\supp a $ satisfies certain geometry conditions.
Then, a   natural problem is: how about the    decay rate if $\supp a\not= \emptyset$ is arbitrary?  
In~\cite{AHR:2018}, $a$ is assumed discontinuous along a $(d-1)$--manifold, $\supp a$ is arbitrary and the rate of the decay of semi-group is estimated
by $(\log t)^{-k}$ for a
data in $D({\cal A}^k)$.
 In this paper, we analyze the logarithmic
decay properties of the solution to \eqref{system} when $a$ is smooth and $\supp a $ is arbitrary.
The main result   reads as follows.
\begin{theorem}\label{th2}
Suppose that the coefficient function $a(\cdot)\in \Con_0^\infty( \Omega)$~ is   nonnegative and $\supp a \subset \Omega$ is non-empty.
 Then the energy of the solution of (\ref{system})
decays at logarithmic speed. More precisely, one has that  there exists  a positive constant $C$ such that
\begin{equation}
\label{th-energy-decay}
\|e^{t{\cal A}} Y_0 \|_{\cal H} \le {{C}\over{[\log(t+2)]^{4k\over5}}}\Big\|Y_0\Big\|_{D({\cal A}^k)},  \quad  \forall \;t>0, \;\; Y_0=(y^0,y^1)\in D({\cal A}^k).
\end{equation}
\end{theorem}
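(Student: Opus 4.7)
My strategy is to derive \eqref{th-energy-decay} from a quantitative bound on the resolvent on the imaginary axis via an abstract Tauberian theorem of Burq / Batty--Duyckaerts type: once one knows that $i\R\subset\rho(\mathcal{A})$ (already established in \cite{liurao}) and that there exist $C,C_0>0$ and an exponent $\alpha\ge 1$ with
$$\|(i\tau-\mathcal{A})^{-1}\|_{\mathcal{L}(\mathcal{H})}\le C\exp\bigl(C_0|\tau|^{\alpha}\bigr),\qquad |\tau|\ge 1,$$
then the decay rate $(\log t)^{-k/\alpha}$ for data in $D(\mathcal{A}^k)$ follows automatically. The exponent $4k/5$ in \eqref{th-energy-decay} would thus correspond to $\alpha=5/4$, a value dictated by the precise loss in the Carleman estimate constructed below.

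\textbf{Reduction to a scalar spectral problem.} I would solve the first line of $(i\tau-\mathcal{A})(y_1,y_2)=(f_1,f_2)$ explicitly as $y_2=i\tau y_1-f_1$ and reduce the system to the single Dirichlet problem
$$P_\tau y_1 := -\tau^2 y_1-\Div\bigl((1+i\tau a)\nabla y_1\bigr)=f_2+i\tau f_1-\Div(a\nabla f_1),\qquad y_1\in H^1_0(\Omega).$$
After introducing a semiclassical parameter $h$ tied to $|\tau|^{-1}$, the operator $h^2 P_\tau$ has principal symbol $(1+i\tau a)|\xi|^2-\tau^2$: elliptic-with-sign where $a>0$, of real principal type where $a=0$, and degenerate at the interface $\partial\supp a$. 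The aim is a semiclassical Carleman estimate with exponential weight $e^{\varphi/h}$ valid on all of $\overline\Omega$.

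\textbf{Carleman estimate and the transition region.} The crux of the proof is establishing this Carleman estimate under only $a\in\Con_0^\infty$. On the open set $\{a>0\}$ one can exploit the positivity of the imaginary part of the symbol via a G\aa{}rding-type argument and obtain a strong dissipative estimate. On the open set $\{a=0\}$ one recovers the classical Lebeau--Robbiano Carleman estimate for the Helmholtz operator $-\Delta-\tau^2$, provided $\varphi$ is pseudoconvex with respect to its characteristics. The main obstacle is the transition zone near $\partial\supp a$, where $a$ may vanish to infinite order and neither of the above estimates is uniform. To treat it I would introduce, as announced in the abstract, an adapted class of $h$-dependent symbols in which the size of $1+i\tau a$ is built in as a weight, and develop just enough pseudo-differential calculus (composition, sharp G\aa{}rding) in this class to perform the standard conjugation computation $e^{\varphi/h}P_\tau e^{-\varphi/h}$ and to bound its commutator from below. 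The derivative loss incurred in this degenerate calculus should be what produces the fractional exponent $\alpha=5/4$.

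\textbf{Globalization and conclusion.} Once the local Carleman estimates are in hand, a standard chain-of-balls argument that connects $\supp a$ to an arbitrary point of $\Omega$ along paths respecting the pseudoconvexity of $\varphi$, combined with cutoff arguments against the Dirichlet boundary $\Gamma$, yields a global bound of the form $\|y_1\|_{H^1(\Omega)}\le e^{C|\tau|^{5/4}}\|P_\tau y_1\|_{L^2(\Omega)}$. Undoing the reduction and controlling $y_2$ in $L^2$ gives the sub-exponential resolvent bound on $\mathcal{H}$, and the Tauberian step quoted in the first paragraph closes the argument. I expect the pseudo-differential calculus at the interface $\partial\supp a$ to be the dominant obstacle; the elliptic and wave regimes, as well as the globalization, are standard semiclassical techniques.
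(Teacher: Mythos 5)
Your proposal follows the paper's architecture very closely: reduce to a sub-exponential resolvent bound $\|(\mathcal{A}-i\lambda)^{-1}\|\lesssim e^{C|\lambda|^{5/4}}$, prove it by a Carleman estimate for the conjugated operator in an adapted pseudo-differential calculus that incorporates the factor $1+i\lambda a(x)$, and then invoke the Burq / Batty--Duyckaerts Tauberian theorem. The paper realizes your ``adapted class of $h$-dependent symbols'' concretely as a Weyl--H\"ormander calculus with metric $g=\lambda\,dx^2+(\tau^2+|\xi|^2)^{-1}d\xi^2$ and weight $\nu(x)=\sqrt{1+\lambda^2 a(x)^2}$, and the $5/4$ exponent drops out of the precise absorption requirement $\lambda^{5/2}\tau\lesssim\tau^3$ for one family of remainder terms; your explanation (``derivative loss'') is consistent with that but not pinned down.

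Two ingredients of the paper's globalization are missing from your sketch and matter. First, the Carleman estimate is established only on $\Omega\setminus\{a\ge\delta\}$, because the weight $\psi$ (\`a la Fursikov--Imanuvilov) is allowed to have critical points inside the high-dissipation zone. Inside $\{a\ge\delta\}$ the paper does \emph{not} use any pseudo-differential or G\aa{}rding argument; it uses an elementary multiplier identity (Lemma 4.1) -- multiplying the reduced equation by $\chi^2\overline y_1$ and by $\overline y_1$ and taking real/imaginary parts -- to bound $\|y_1\|_{H^1}$ there by the data plus an $\epsilon\|y_1\|_\Omega^2$ term. Your plan to obtain a ``strong dissipative estimate'' on all of $\{a>0\}$ by G\aa{}rding alone would not give a clean $L^2$ bound on $y_1$ uniformly up to where $a$ vanishes, and more importantly it bypasses the simple energy step that actually closes the global estimate. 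Second, the right-hand side of your reduced equation contains $\Div(a\nabla f_1)$, which forces you to prove a Carleman estimate with an $H^{-1}$-type source (as in Imanuvilov--Puel); the paper devotes Theorem~\ref{th: Carleman H-1} to exactly this, gaining a power of $\tau^{-1}$ on the left to compensate. Finally, the globalization itself is done with two static cutoffs ($\chi_1$ near $\Gamma$, $\chi_2$ near $\supp a$) against a single global weight; no chain-of-balls propagation is used or needed because the weight is pseudoconvex on the whole complement of $\{a\ge\delta\}$. Your chain-of-balls picture would be much harder to run uniformly across $\partial\supp a$, where the symbol degenerates.
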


Our approach is based on the results duo to \cite{burq}, which reduced the problem of determining the rate of energy decay to estimating the norm of the resolvent operator along the imaginary axis, see also \cite{Duyckaerts, lebau-robbiano1}, etc.
Our argument divides naturally three steps. First, in Section 2, we show some preliminaries including definitions and classical results about symbol, pseudo-differential calculus, 
and commutator estimate, etc.  
Then in Section 3, we prove corresponding Carleman estimates. Finally, in Section 4, we present a resolvent estimate and obtain Theorem \ref{th2}.
This theorem is a consequence of a resolvent estimate. The proof is given in Section~\ref{section: resolvent estimate}.
The method was  developed in    (\cite{Bellassoued1, Bellassoued2, Bellassoued3, lebau, lebau-robbiano1, robbiano} and the references cited therein).

Throughout this paper, we use $\|\cdot\|_V$ and $( \cdot\,|\,\cdot)_V$ to denote the norm and inner product on $L^2(V)$, where $V\subset {\mathbb R}^d$ if there is no further comments. When writing
$f\lesssim g$ (or $f \gtrsim g $), we mean that there exists a positive constant $C$ such that $f \le Cg$ (or $f\ge Cg$).
For $j=1,2,\cdots, $   define  operators   $D_j=-i\partial_{x_j}$,
$D = (D_1,\cdots, D_d)$, $D^2=\sum\limits_{j=1}^dD_j^2$  and
$Da(x)D=\sum\limits_{j=1}^dD_ja(x)D_j$.

\section{Preliminaries}
\setcounter{equation}{0} \setcounter{theorem}{0}


%
%
We shall prove Theorem \ref{th2} by Weyl-H\"ormander calculus, which was introduced H\"ormander (\cite{Hormander83,Rousseau}).
In this section,  some definitions and results on the class of symbol and pseudo-differential calculus are given.

\subsection{Symbol and Symbolic calculus}
\label{Sec: Symbol}


 For any $(x,\xi) \in \R^d \times\R^d, \; \lambda\in \R$ and $  \tau>0 $, we introduce the metric
 \begin{equation}
 \label{eq:metric}
 g=g_{x,\xi}=\lambda dx^2+\mu^{-2}d\xi^2, \quad \;  \mbox{ where  }  \;  \mu^2=\mu(\tau,\xi)^2=\tau^2+\,|\,\xi\,|\,^2,
 \end{equation}
 and the weight
 \begin{equation}
 \label{eq:weight}
\nu =\nu(x,\lambda) =\sqrt{1+\lambda^2a(x)^2}.
 \end{equation}
Note that and  $g_{x,\xi}(X,\Xi)=\lambda \,|\,X\,|\,^2+\mu^{-2}(\tau,\xi)\,|\,\Xi\,|\,^2$
 for all $X,\Xi\in\R^d$.
 Then we have the following results.
%

%
%
%
%
%
\begin{lemma}\label{lemma-metric-weight}
Assume that there exist positive constants $\,{\cal C}$ and $ \lambda_0$ such that $\lambda\ge \lambda_0$ and $\tau \ge \max\{\,{\cal C}  \lambda , \; 1\,\} $. It holds
\begin{enumerate}
\item[{\rm (i)}]The metric  $g=g_{x,\xi} $ defined by \eqref{eq:metric} is
admissible, i.e., it is slowly varying and temperate.
\item[{\rm (ii)}] The weight   $ \nu = \nu(x,\lambda)$  defined by \eqref{eq:weight} is   admissible, i.e., it is $g$-continuous and $g$-temperate.
\end{enumerate}
\end{lemma}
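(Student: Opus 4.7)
The plan is to verify the four conditions directly: slow variation and temperance of $g$, then $g$-continuity and $g$-temperance of $\nu$, exploiting $\tau \ge {\cal C}\lambda$ throughout. For the metric $g$, the coefficient $\lambda$ of $dx^2$ is independent of $(x,\xi)$, so only the $\xi$-coefficient $\mu^{-2}$ requires attention and everything reduces to the elementary Peetre-type inequality
\[
\mu(\tau,\eta)^2 \le 2\mu(\tau,\xi)^2 + 2|\eta-\xi|^2.
\]
Slow variation then follows from the bound $|\eta-\xi|^2 \le c\,\mu(\tau,\xi)^2$ provided by $g_{x,\xi}(y-x,\eta-\xi) \le c$. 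For temperance, I rewrite $|\eta-\xi|^2 = \lambda \cdot \lambda^{-1}|\eta-\xi|^2 \le \lambda\, g^\sigma_{x,\xi}(y-x,\eta-\xi)$ and use $\tau \ge {\cal C}\lambda$ with $\lambda \ge \lambda_0$ to obtain $\lambda/\mu(\tau,\xi)^2 \le 1/({\cal C}^2\lambda_0)$, giving
\[
\frac{\mu(\tau,\eta)^2}{\mu(\tau,\xi)^2} \le 2 + \frac{2\,g^\sigma_{x,\xi}(y-x,\eta-\xi)}{{\cal C}^2\lambda_0}.
\]
The same computation shows that the Planck function $h^2 = \lambda/\mu^2$ is $\le 1/({\cal C}^2\lambda_0)$, which is the smallness needed for the Weyl--H\"ormander calculus to apply.

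The hard part of the lemma is the $g$-continuity of $\nu$. Inside a $g$-ball one has $\lambda|y-x|^2 \le c$, i.e.\ $|y-x| \le \sqrt{c/\lambda}$, so the naive Lipschitz bound $|a(y) - a(x)| \le L|y-x|$ produces an error of order $L\sqrt{\lambda}$ in $\lambda a(y)$, which cannot be absorbed into $\nu(x) \ge 1$ when $a(x)$ is small. The decisive input is the classical inequality
\[
|\nabla a(x)|^2 \le 2\|\nabla^2 a\|_\infty\, a(x),
\]
valid for any nonnegative $a \in C^2$ and proved by substituting $a(x-s\nabla a(x)/|\nabla a(x)|) \ge 0$ into its second-order Taylor expansion and optimising in $s$. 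Equivalently $\sqrt{a}$ is globally Lipschitz, so $a(y) \le (\sqrt{a(x)} + C|y-x|)^2 \le 2a(x) + C'|y-x|^2$. Squaring, and combining with the AM-GM bound $\lambda a(x) \le \tfrac{1}{2}(1+\lambda^2 a(x)^2)$ together with $\lambda|y-x|^2 \le c$, I arrive at $\nu(y)^2 \le (3+C''c)\,\nu(x)^2$, which is the desired slow variation of $\nu$.

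For $g$-temperance of $\nu$, I argue by dichotomy: if $\lambda|y-x|^2 \le c_0$, the previous step already provides a bounded ratio; otherwise $|y-x|^2 > c_0/\lambda$, and then
\[
g^\sigma_{x,\xi}(y-x,\eta-\xi) \ge \mu(\tau,\xi)^2|y-x|^2 \ge {\cal C}^2 c_0\,\lambda,
\]
while the crude bound $\nu(y) \le 1 + \lambda\|a\|_\infty$ and $\nu(x) \ge 1$ yield $\nu(y)/\nu(x) \lesssim \lambda \lesssim 1 + g^\sigma_{x,\xi}(y-x,\eta-\xi)$, i.e.\ temperance with exponent $N=1$. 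The only non-routine ingredient in the whole lemma is the $|\nabla a|^2 \lesssim a$ refinement that tames the stray $\sqrt{\lambda}$ and is the only place where the sign hypothesis on $a$ is genuinely used; everything else is standard bookkeeping for Weyl--H\"ormander metrics as in Chapter~18 of H\"ormander's book.
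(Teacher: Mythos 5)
Your proposal is correct and, while it rests on the same decisive inequality as the paper, namely $|\nabla a|^2 \le 2\|\nabla^2 a\|_\infty\, a$ for nonnegative $a\in\Con^2$, the execution differs in several of the verification steps, generally to your advantage. For the temperance of $g$ you observe that the Planck function $h^2 = \lambda/\mu^2$ is bounded by $1/({\cal C}^2\lambda_0)$ under the standing hypotheses, which converts the Peetre inequality into temperance with exponent $N=1$ in one line; the paper instead splits into the cases $\tau^2+|\eta|^2\lessgtr 4(\tau^2+|\xi|^2)$ and lands on $N=2$ after several intermediate inequalities. For the $g$-continuity of $\nu$ you package the crucial inequality as global Lipschitz continuity of $\sqrt{a}$, giving $a(y)\le 2a(x)+C|y-x|^2$ directly and hence $\nu(y)\lesssim\nu(x)$; the paper reaches the same conclusion by introducing the auxiliary supremum function $F(t)=\sup_{s\in[0,t]}\lambda a(x+s(y-x))$ and a bootstrap, which is more cumbersome but equivalent. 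For the $g$-temperance of $\nu$ you argue by dichotomy on $\lambda|y-x|^2$ and use the crude bound $\nu(y)\lesssim\lambda$ in the far regime, whereas the paper uses the plain Lipschitz estimate $a(y)\le a(x)+\|\nabla a\|_\infty|x-y|$ and $\lambda\le\tau/{\cal C}$ to conclude with $N=\tfrac12$; both arguments are short and correct, though the paper's is arguably more direct here. One small point: the paper proves admissibility for the equivalent weight $1+\lambda a(x)$ and then transfers it to $\nu=\sqrt{1+\lambda^2 a^2}$ via comparability, whereas you work with $\nu$ directly; either is fine. Also note that your Planck-function bound $h^2\le 1/({\cal C}^2\lambda_0)$ gives $h\le 1$, required for the calculus, only under the additional mild assumption ${\cal C}^2\lambda_0\ge 1$, which the paper implicitly assumes but never states.
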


\begin{proof} {\bf  (i) }
From Definition 18.4.1 in \cite{Hormander83},    the   metric  $g_{x,\xi}$ defined by \eqref{eq:metric} is  {\bf slowly varying} if there exist $\delta>0$ and $C>0$ such that
$$
g_{x,\xi}(y-x,\eta-\xi)\le \delta \textrm{ implies } g_{y,\eta} (X,\Xi)   \le C  g_{x,\xi} (X,\Xi),\; \; \forall\;x,y,\xi,\eta, X,\Xi\in\R^d,
$$
where the constants $\delta$ and $C$ are independent on the parameters $\lambda$ and $\tau$.

Suppose  $0<\delta  \le 1/4$ and
$$
g_{x,\xi}(y-x,\eta-\xi)=\lambda \,|\,y-x\,|\,^2+(\tau^2+\,|\,\xi\,|\,^2)^{-1}\,|\,\eta-\xi \,|\,^2\le\delta.
$$
Then, we have
$$
\begin{array}{lcl}
\tau^2+\,|\, \xi \,|\,^2 & \le& \tau^2+2\,|\, \xi -\eta\,|\,^2+2\,|\, \eta \,|\,^2
\\ \noalign{\medskip} &  \le & \tau^2+2 \delta (\tau^2+\,|\,\xi\,|\,^2) +2\,|\, \eta \,|\,^2.
\end{array}
$$
This implies that $\tau^2+\,|\, \xi \,|\,^2 \le 4( \tau^2 +\,|\, \eta \,|\,^2)$. Consequently,
$$
g_{y,\eta} (X,\Xi) =\lambda \,|\,X\,|\,^2+( \tau^2 +\,|\, \eta \,|\,^2)^{-1}\,|\,\Xi\,|\,^2\le \lambda \,|\,X\,|\,^2+ {1\over 4}(\tau^2+\,|\, \xi \,|\,^2)^{-1}\,|\,\Xi\,|\,^2\le     g_{x,\xi} (X,\Xi) .
$$
Therefore, $g$ is slowly varying.

For a  given metric  $g_{x,\xi}$, the  associated metric $g_{x,\xi}^\sigma$ is defined by
  $g_{x,\xi}^\sigma=(\tau^2+\,|\, \xi \,|\,^2) dx^2+\lambda^{-1}d\xi^2$.
The metric $g_{x,\xi}$  is {\bf temperate } if there exist $C>0$ and $N>0$, such that
\begin{equation}
\label{eq: temperate}
g_{x,\xi}(X,\Xi)\le C g_{y,\eta}(X,\Xi)\big(1+g^\sigma_{x,\xi}(x-y,\xi-\eta)\big)^N, \;\; \forall\; x,y,\xi,\eta, X,\Xi\in\R^d,
\end{equation}
where the constants $C$ and $N$ are independent on  the parameters $\lambda$ and $\tau$ (Definition 18.5.1 in \cite{Hormander83}).

\indent
For  the metric $g=g_{x,\xi}$ defined by \eqref{eq:metric}, \eqref{eq: temperate} is equivalent to
\begin{equation}
\label{eq: temperate1}
\begin{array}{ll}&
\lambda \,|\,X\,|\,^2+(\tau^2+\,|\, \xi \,|\,^2)^{-1}\,|\,\Xi\,|\,^2
\\ \noalign{\medskip} \le& C \big( \lambda \,|\,X\,|\,^2+(\tau^2+\,|\, \eta \,|\,^2)^{-1}\,|\,\Xi\,|\,^2\big)\big(1+(\tau^2+\,|\, \xi \,|\,^2)\,|\,x-y\,|\,^2
+\lambda^{-1}\,|\,\xi-\eta\,|\,^2\big)^N.
\end{array}
\end{equation}

\noindent
First, assume that
  $\tau^2 + \,|\,\eta\,|\,^2\le 4(\tau^2+\,|\,\xi\,|\,^2)$. It follows that
\begin{equation} \label{eq: metric temperate}
 (\tau^2+\,|\, \eta \,|\,^2)\le C(\tau^2+\,|\, \xi \,|\,^2) \big(1+\lambda^{-1}\,|\,\xi-\eta\,|\,^2\big)^N, \quad   C>0,\;N>0.
\end{equation}
Then it is easy to obtain \eqref{eq: temperate1} from \eqref{eq: metric temperate}.

Secondly, consider the case
  $\tau^2 + \,|\,\eta\,|\,^2 > 4(\tau^2+\,|\,\xi\,|\,^2)$.
  Then
\begin{equation}
\label{eq: metric t1}
\,|\,\eta\,|\,  > 2\,|\,\xi\,|,   \quad   \,|\,\eta\,|\, > \sqrt{3}\tau,
\end{equation}
 and
\begin{equation}
\label{eq: metric t2}
\,|\,\xi-\eta\,|\,> {1\over2}\,|\,\eta\,|\,> {\sqrt{3}\over2}\tau> {\sqrt{3}\over2}{\cal C}\lambda.
\end{equation}
 It follows from \eqref{eq: metric t1} and \eqref{eq: metric t2} that
  $$
  \lambda^{-1}\,|\,\xi-\eta\,|\,^2
  >{\sqrt{3}\over2}{\cal C} \,|\,\xi-\eta\,|\,
  >  {\sqrt{3}\over4}{\cal C}\,|\,\eta\,|\,.
  $$
  Consequently,
  $$
    \big(1+\lambda^{-1}\,|\,\xi-\eta\,|\,^2\big)^2 >  {3\over16}{\cal C}^2\,|\,\eta\,|\,^2 >{3\over32}{\cal C}^2(\,|\,\eta\,|\,^2+ 3\tau^2) .
  $$
  This together with $\tau^2+\,|\, \xi \,|\,^2 \ge1$ yields that  there exists a positive constant $C$ such that  ~\eqref{eq: metric temperate}  holds with $N=2$.

 {\bf   (ii) } It is known from  Definition 18.4.2   in \cite{Hormander83} that a weight $\nu(x)$ is {\bf  $g$-continuous} if there exist $\delta>0$ and $C>0$ such that
$$
g_{x,\xi}(y-x,\eta-\xi)\le\delta\; \textrm{ implies }\;  C^{-1}\nu(x)\le \nu(y)\le C \nu(x),\; \; \forall\; x,y,\xi,\eta\in\R^d.
$$
where the constants $\delta$ and $C$ are independent on the parameters $\lambda$ and $\tau$.
 Since the weight  $\nu(x)$ defined by \eqref{eq:weight} does not depend on $\xi$, the above condition  is reduced to
$$\lambda\,|\,x-y\,|\,^2\le\delta\; \textrm{ implies }\;  C^{-1} \nu(x) \le \nu(y)\le C \nu(x),\; \; \forall\; x,y \in\R^d.$$

\noindent
The weight $\nu(x)$ is  {\bf $g$-temperate} if there exist $C>0$ and $N>0$ such that
\begin{equation}
\label{eq:def-s-temperate}
\nu(y)\le C\nu(x)\big(1+g^\sigma_{y,\eta}(x-y,\xi-\eta)  \big)^N,\; \; \forall\; x,y,\xi,\eta\in\R^d,
\end{equation}
where the constants $C$ and $N$ do not depend on the parameters $\lambda$ and $\tau$  (Definition 18.5.1 in \cite{Hormander83}).
The weight $\nu(x)$ is   admissible
 if it is $g$-continuous and $g$-temperate.
When a weight is admissible,
all the powers of this weight are
$g$-continuous and $g$-temperate.
Therefore, it suffice to prove that $ 1+\lambda a(x)$ is admissible.

 Let $s\in[0,t] $ and  $t\in[0,1]$. Define  $f(s)=\lambda a(x+s(y-x))$
and $F(t)=\sup_{s\in[0,t]}f(s)$ where $x,\;y\in \Omega $ satisfying $\lambda\,|\,x-y\,|\,^2 \le \delta$.
It is clear that $f'(s)=\lambda \, a'(x+s(y-x))\,(y-x)$.
Combining this with
the following inequality
\begin{equation}  \label{eq: estimate derivative positive function}
\,|\,a'(x)\,|\,^2\le 2a(x)\|a''\|_{L^\infty(\Omega)}, \;\; \; \forall\;  x\in \Omega,
\end{equation}
we obtain
$$
\,|\,f'(s)\,|\,\le \lambda \,|\,a'(x+s(y-x))\,|\,\,|\,y-x\,|\,\le 2\lambda \| a''\|_\infty^{1\over2}\,[ a( x+s(y-x)) ]^{1\over2}\,|\,y-x\,|\,.
$$
The proof of \eqref{eq: estimate derivative positive function} will be given later.
Consequently,
$$
\sup_{s\in[0,t]}\,|\,f'(s)\,|\,\le 2 \lambda^{1\over2}  \| a''\|_{L^\infty(\Omega)}^{1\over2}\,F(t)^{1\over2}\,|\,y-x\,|\,.
$$
Since $f(t)\le f(0)+t\,\sup_{s\in [0,t]}\,|\,f'(s)\,|\,$,   $F$ is non-decreasing and $\lambda\,|\,x-y\,|\,^2 \le \delta$, we obtain
that
for all $t\in[0,1]$,
$$
f(t)\le f(0)+ C \lambda^{1\over2}  F(t)^{1\over2}\,|\,y-x\,|\,\le  f(0)+ C\sqrt \delta F(t)^{1\over2}\le   f(0)+ C\sqrt \delta F(\alpha)^{1\over2},
$$


\noindent
where $C=2  \| a''\|_{L^\infty(\Omega)}^{1\over2}$ and   $\alpha\in[ t,1]$. Note that $f(0)=F(0)$. It follows that
\begin{equation*}
F(\alpha)=\sup_{t\in [0,\alpha]}f(t)\le F(0)+C\sqrt \delta F(\alpha)^{1\over2}.
\end{equation*}
This yields
\begin{equation}\label{eq: estimation on g(alpha)}
1+F(\alpha)\le 1+ F(0)+C\sqrt \delta\big(1+ F(\alpha) \big)^{1\over2}
\le 1+ F(0)+C\sqrt \delta\big(1+   F(\alpha) \big) .
 \end{equation}
%

 \noindent
By choosing $\delta $ sufficiently small such that $C\sqrt \delta\le 1/2$, one can deduce from ~\eqref{eq: estimation on g(alpha)} that
$$
1+F(\alpha)\le 2( 1+ F(0)),  \quad  \forall\; \alpha \in [t,\,1].
$$
In particular, we have
$$1+\lambda a(y)
\le 2(1+\lambda a(x)).
$$
The above inequality remains true if we   exchange $x$ and $y$. Therefore, the weight $1+\lambda a(x) $ is g-continuous.

 On the other hand,
note that $1+\lambda a(x)$ is independent to $\xi$. Then, to obtain the  weight $1+\lambda a(x) $  is  $\sigma$-temperate,
it is sufficient to prove that
\begin{equation}
\label{eq: m-temperate}
1+\lambda a(y)\le C (1+\lambda a(x) )(1+\tau^2\,|\,x-y\,|\,^2)^N.
 \end{equation}
 In fact,
it is clear that $1+\lambda a(y)\le 1+\lambda ( a(x) +C\,|\,x-y\,|\,)$ where $C=\| a'\|_{L^\infty(\Omega)}$.
Therefore,  there exists positive constant $C'= C {\cal C}^{-1}$ such that
$$
1+\lambda a(y)
\le ( 1+\lambda  a(x))(1+ C' \tau \,|\,x-y\,|\,)
\le ( 1+\lambda  a(x))(2+2(C'\tau \,|\,x-y\,|\,)^2)^{1\over2}.
$$

\noindent
Thus, we obtain \eqref{eq: m-temperate} with $N={1\over2}, \; C= 2\max\{1,\; C'\}$.
 \end{proof}

%
%
%
%

%
%

\vskip 4mm

%
%

\begin{remark}
We claim that \eqref{eq: estimate derivative positive function} holds for any compactly supported and nonnegative  function  $a \in\Con^2(\Omega) $.
In fact, from the following identity
$$
a(x+h) = a(x) + a'(x) h + \int_0^1(1-t) a''(x+th)h^2dt, \;\; \forall \; h\in {\mathbb R},
$$
one can get
$$
  a(x) + a'(x) h +{1\over2} \|a''\|_{L^\infty(\Omega)} \,|\,h\,|\,^2 \ge 0.
$$
Let $h=y\,a'(x),$ where $y\in {\mathbb R}$ and $x\in \Omega$ are arbitrary.
It follows from the above inequality that
$$
  a(x) + \,|\,a'(x)\,|\,^2 y  +{1\over2} \|a''\|_{L^\infty(\Omega)}   \,|\,a'(x)\,|\,^2 y^2 \ge0,  \quad  \forall \; x\in \Omega, \;y\in {\mathbb R}.
$$
Then,
$$
\,|\,a'(x)\,|\,^4 -2 a(x) \|a''\|_{L^\infty(\Omega)}   \,|\,a'(x)\,|\,^2 \le0,
$$
and \eqref{eq: estimate derivative positive function} is proved.
\end{remark}

\vskip 4mm

\begin{definition}(Section 18.4.2 in \cite{Hormander83}) Assume the weight $m(x,\xi)$ is admissible  and the metric $g$ is defined by \eqref{eq:metric}. Let $q(x,\xi, \lambda, \tau)$ be a $ \Cinf$ function with respect to $(x,\,\xi)$ and $\lambda,\; \tau$ be parameters satisfying conditions in Lemma \ref{lemma-metric-weight}.
The symbol $q(x,\xi, \lambda, \tau)$ is in {\bf class
 $S(m,g)$}   if for all $\alpha,\beta\in\N^d$
there exists $C_{\alpha,\beta}$ independent of $\tau$ and $\lambda$ such that
$$
\,|\,\d_x^\alpha\d_\xi^\beta q(x,\xi, \lambda, \tau)\,|\,\le C_{\alpha,\beta} \, m(x,\xi)\,\lambda^{\,|\,\alpha\,|\,/2}\,\mu(\tau, \xi)^{-\,|\,\beta\,|\,}.
$$
\end{definition}

\vskip 4mm

\begin{remark}\label{remark-class1}
\begin{enumerate}
\item[{\rm (i)}]
It is clear that $\mu = \sqrt{\tau^2+\,|\,\xi\,|\,^2 } \in S(\mu,g)$ since
 $
\,|\, \d_\xi^\beta \mu(\tau,\xi) \,|\,\lesssim \mu ^{1-\,|\,\beta\,|\,} $
for all $\beta \in {\mathbb R}^d.$

 \item[{\rm (ii)}] Let $\nu$
be the weight  defined by \eqref{eq:weight}.
It is easy to get that $\lambda a\in S(\nu,g)$. In fact, if  $\,|\,\alpha\,|\,\ge 2$, it holds that $ \,|\, \d_x^\alpha(\lambda a(x))\,|\,\le C_\alpha \lambda\le C_\alpha  \lambda^{\,|\,\alpha\,|\,/2}\nu (x)$ , where   $C_\alpha>0.$
For the case    $\,|\,\alpha\,|\,=1$, it follows from~\eqref{eq: estimate derivative positive function}   that
 $$\,|\,\d^{\alpha}_x(\lambda a(x))\,|\,\le \sqrt{2}\,\|a''\|_{L^\infty(\Omega)}^{1\over2}\lambda^{1\over2} \big( \lambda a(x) \big)^{1\over2} $$
 Note that $\,|\,\lambda a(x)\,|\,<  C \nu^2(x)$ for some $ C>0$.
 This together with the above inequality, we have that
$$\,|\,\d^{\alpha}_x(\lambda a(x))\,|\, < \sqrt{2C}\,\|a''\|_{L^\infty(\Omega)}^{1\over2}  \lambda^{1\over2} \nu(x). $$

\item[{\rm (iii)}]
It is known from Lemma 18.4.3 of \cite{Hormander83}  that  if the metric $g$ and weights $m_1,\;m_2 $  are admissible, symbols $a\in S(m_1,g)$ and $b\in S(m_2,g)$, then $ab\in S(m_1m_2,g)$. In particular,
 $(\lambda a)^j\mu^k\in S(\nu^j\mu^k,g)$ for all $j,\,k\in {\mathbb N}\cup\{0\}$.
\end{enumerate}
\end{remark}

\vskip 4mm


%
%

\begin{definition}
Let $b\in S(m,g)$ be 	a symbol and $u\in \S(\R^d)$, we set
\begin{align*}
  b(x,D,\tau) u (x) = \Op(b) u(x) := (2 \pi)^{-d}  \int_{\R^d} e^{i {x}\cdot{\xi}}
 \, b(x,\xi,\tau)\, \widehat{u}(\xi)  \ d \xi.
\end{align*}
 \end{definition}

It is known that $\Op(b) \,:\, \S(\R^d)\to \S(\R^d)$ is continuous and $\Op(b)$ can be uniquely extended to $\S'(\R^d)$  continuously.
The following two lemmas are consequences of Theorem 18.5.4 and 18.5.10 in \cite{Hormander83}.

\begin{lemma}
\label{lemma-remainder1}
Let $b\in S(m,g) $ where $m $ is an admissible weight and $g$ is defined by \eqref{eq:metric}.
Then there exists $c\in S(m,g) $ such that $\Op(b)^*=\Op(c)$ and $c(x,\xi)=
\overline{b(x,\xi)}+r(x,\xi)$ where  the remainder $r\in S(\lambda^{1\over2}\mu^{-1}m,g) $.
\end{lemma}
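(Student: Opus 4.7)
The plan is to invoke directly the adjoint formula of the Weyl--H\"ormander calculus and to peel off its leading term. For the left quantization used here, Theorem 18.5.4 of \cite{Hormander83} provides the asymptotic expansion
\[
c(x,\xi) \;\sim\; \sum_{\alpha \in \N^d} \frac{1}{\alpha!}\, \d_\xi^\alpha D_x^\alpha \, \overline{b(x,\xi)}
\]
for the symbol of $\Op(b)^\ast$, in the sense that the true symbol differs from any partial sum only by a symbol belonging to the next class. I would truncate at $|\alpha|=0$ and set $r := c - \overline{b}$; the claim is then that $r$ sits in $S(\lambda^{1/2}\mu^{-1}m, g)$.

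Before applying the theorem I would check the structural hypotheses. The metric $g$ is admissible by Lemma \ref{lemma-metric-weight}(i), the weight $m$ is admissible by assumption, and $\overline{b} \in S(m,g)$ trivially. The gain parameter (Planck function) controlling the calculus is
\[
h(x,\xi) \;=\; \sup_{(X,\Xi)\neq 0}\sqrt{g_{x,\xi}(X,\Xi)/g^\sigma_{x,\xi}(X,\Xi)} \;=\; \frac{\lambda^{1/2}}{\mu(\tau,\xi)}.
\]
Because of the standing assumption $\tau \ge \mathcal{C}\lambda$ in Lemma \ref{lemma-metric-weight}, one has $\mu^2 \ge \tau^2 \ge \mathcal{C}^2\lambda^2$, so $h \le \mathcal{C}^{-1}$ is uniformly small, and the strong uncertainty principle required by H\"ormander's theorem is fulfilled.

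It remains to identify the class of $r$. For each multi-index $\alpha$ with $|\alpha|\ge 1$, direct inspection of the defining seminorms of $S(m,g)$ shows that $\d_\xi^\alpha D_x^\alpha \overline{b}\in S(\lambda^{|\alpha|/2}\mu^{-|\alpha|}m,\, g)$, and since $\lambda^{1/2}\mu^{-1}\le \mathcal{C}^{-1}$ is bounded, each of these classes is contained in $S(\lambda^{1/2}\mu^{-1}m,\, g)$. Theorem 18.5.4 of \cite{Hormander83} then asserts the existence of a genuine symbol $r$ realizing this formal expansion in that class, which gives the conclusion. The only delicate point is that all the symbol-seminorm constants must be independent of $\lambda$ and $\tau$; this is automatic because the admissibility constants of $g$ and of $m$ furnished by Lemma \ref{lemma-metric-weight} are themselves $\lambda$- and $\tau$-independent, and H\"ormander's theorem uses only those constants together with the uniform bound on $h$.
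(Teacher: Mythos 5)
Your argument is essentially the same route the paper takes: the paper itself disposes of Lemmas~\ref{lemma-remainder1} and~\ref{lemma-remainder2} in a single sentence by citing Theorems~18.5.4 and~18.5.10 of \cite{Hormander83}, and you have simply supplied the bookkeeping those theorems require. Your computation of the Planck function $h = \lambda^{1/2}\mu^{-1}$ as the gain parameter is correct (both extreme directions of $g/g^\sigma$ give $\lambda/\mu^2$), and the observation that $h\lesssim 1$ under the standing assumption $\tau\ge\mathcal{C}\lambda$, $\lambda\ge\lambda_0\ge 1$ is exactly what puts the metric in the range where the Weyl--H\"ormander calculus applies with constants independent of $\lambda,\tau$. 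The identification $\partial_\xi^\alpha D_x^\alpha\overline b\in S(h^{|\alpha|}m,g)\subset S(hm,g)$ for $|\alpha|\ge1$ is a straight reading of the seminorm definition and is what makes the remainder land in $S(\lambda^{1/2}\mu^{-1}m,g)$. One small attribution point: Theorem~18.5.4 is stated for the Weyl quantization (for which the adjoint formula is trivial, $\overline{a}^w$), and it is Theorem~18.5.10 that transfers between the Weyl and the left (Kohn--Nirenberg) quantization used in Definition~2.2; the paper cites both precisely for that reason. The expansion you wrote, $c\sim\sum_\alpha\frac{1}{\alpha!}\partial_\xi^\alpha D_x^\alpha\overline b$, is the correct one for the left quantization, but it is obtained by combining those two theorems rather than from 18.5.4 alone. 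This does not affect the conclusion.
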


\begin{lemma}\label{lemma-remainder2}
Let $b\in S(m_1,g) $ and  $c\in S(m_2,g) $  where $m_j$ are  admissible weights for $j=1,2$ and
$g$ is defined by \eqref{eq:metric}.
Denote by
$
[Op(b),\, Op(c)] =
Op(b) \circ Op(c) - Op(c)\circ Op(b)
$ and   Poisson bracket
$\{b,\, c\}(x,\xi,\tau) = \sum\limits_{1\le j\le d}
(\partial_{\xi_j}b\, \partial_{x_j}c
-\partial_{x_j}b\, \partial_{\xi_j}c  )(x,\xi,\tau).$
Then,
\begin{enumerate}
\item[{\rm (i)}]
there exists $d\in S(m_1m_2,g)$ such that
$\Op(b)\Op(c)=\Op(d)$ and $d(x,\xi)=b(x,\xi)c(x,\xi)+r(x,\xi)$ where $r\in S(\lambda^{1\over2}\mu^{-1}m_1m_2,g) $.
\item[{\rm (ii)}]
for commutator $i[\Op(b),\Op(c)]=\Op(f)$, it holds that $f\in S(\lambda^{1\over2}\mu^{-1}m_1m_2,g) $
and $f(x,\xi)=\{ b,c\}(x,\xi)+r(x,\xi)$ where
$r\in S(\lambda \mu^{-2}m_1m_2,g) $.

\end{enumerate}
\end{lemma}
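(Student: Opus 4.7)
\begin{prooff}[Proof sketch]
The plan is to view the lemma as a direct application of Theorem~18.5.4 (composition of Weyl quantizations) and Theorem~18.5.10 (commutators) in \cite{Hormander83}, once the relevant Planck function of the metric $g$ has been identified. Recall that to a metric $g_{x,\xi}$ one associates the gain function $h(x,\xi)=\sup_{(X,\Xi)\neq 0}\bigl(g_{x,\xi}(X,\Xi)/g^\sigma_{x,\xi}(X,\Xi)\bigr)^{1/2}$, and the composition and commutator expansions improve by powers of $h$ at each step.

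First I would compute $h$ explicitly for our metric $g_{x,\xi}=\lambda\,dx^2+\mu^{-2}\,d\xi^2$, whose symplectic dual reads $g^\sigma_{x,\xi}=\mu^{2}\,dx^2+\lambda^{-1}\,d\xi^2$. The ratio $g_{x,\xi}(X,\Xi)/g^\sigma_{x,\xi}(X,\Xi)$ equals $\lambda\mu^{-2}$ both when $\Xi=0$ and when $X=0$, and a short interpolation argument (or the fact that $g$ and $g^\sigma$ are simultaneously diagonal) shows that this is indeed the supremum. Hence $h(x,\xi)=\lambda^{1/2}\mu^{-1}$, and the admissibility hypotheses needed to apply Theorems~18.5.4 and 18.5.10 are supplied by Lemma~\ref{lemma-metric-weight} together with the admissibility of the weights $m_1,m_2$.

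For part (i), Theorem~18.5.4 furnishes a symbol $d\in S(m_1m_2,g)$ with $\Op(b)\Op(c)=\Op(d)$ and the asymptotic expansion $d(x,\xi)\sim\sum_{\alpha}(i^{-|\alpha|}/\alpha!)\,\partial_\xi^\alpha b(x,\xi)\,\partial_x^\alpha c(x,\xi)$, each term lying in $S(h^{|\alpha|}m_1m_2,g)$. The $\alpha=0$ term is $bc$, and truncating after this term leaves a remainder $r\in S(h\,m_1m_2,g)=S(\lambda^{1/2}\mu^{-1}m_1m_2,g)$, which is exactly the claim.

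For part (ii), I would apply (i) twice and subtract: $b\# c-c\# b$ kills the $\alpha=0$ contribution $bc-cb=0$, and the $|\alpha|=1$ terms combine to give $(1/i)\{b,c\}$, up to a remainder of order $h^2$ coming from the $|\alpha|=2$ symmetric contribution, which survives in $b\#c$ but \emph{cancels} upon antisymmetrization in Weyl (or classical) calculus. Multiplying by $i$ yields $f=\{b,c\}+r$ with $r\in S(h^2 m_1m_2,g)=S(\lambda\mu^{-2}m_1m_2,g)$. The overall class $f\in S(\lambda^{1/2}\mu^{-1}m_1m_2,g)$ follows from $\partial_\xi b\in S(\mu^{-1}m_1,g)$ and $\partial_x c\in S(\lambda^{1/2}m_2,g)$, which place each term of $\{b,c\}$ in $S(h\,m_1m_2,g)$. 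The only mildly delicate step is justifying the order-$h^2$ cancellation in the antisymmetric combination; this is standard for the Weyl quantization (and holds here because $\Op$ differs from the Weyl quantization only by lower-order corrections absorbed in the same class), and is precisely the content of Theorem~18.5.10 in \cite{Hormander83}.
\end{prooff}
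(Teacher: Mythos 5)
Your proof sketch is essentially the right argument, and it matches the paper's own treatment, which simply cites Theorems~18.5.4 and~18.5.10 of H\"ormander without further elaboration: the whole content is the identification of the Planck function $h=\lambda^{1/2}\mu^{-1}$ for the metric $g=\lambda\,dx^2+\mu^{-2}\,d\xi^2$ (with $g^\sigma=\mu^2\,dx^2+\lambda^{-1}\,d\xi^2$), together with Lemma~\ref{lemma-metric-weight} guaranteeing admissibility, after which the one-term and two-term truncations of the $\#$-expansion give parts (i) and (ii) respectively.

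One small inaccuracy worth correcting: the parenthetical claim that the $|\alpha|=2$ contribution ``cancels upon antisymmetrization in Weyl (or classical) calculus'' is not right for the quantization $\Op$ the paper actually uses, which is the standard Kohn--Nirenberg quantization (as is clear from Lemma~\ref{lemma-remainder1}, where $\Op(b)^*$ acquires a nontrivial $O(h)$ correction to $\overline{b}$, unlike Weyl). In the Kohn--Nirenberg calculus the $|\alpha|=2$ terms of $b\#c-c\#b$ do \emph{not} cancel (take $b=\xi_1^2$, $c=x_1^2$), and the commutator remainder is genuinely $O(h^2)$; the $O(h^3)$ improvement from parity is a special feature of Weyl. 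But this does not hurt you: the lemma only asserts $r\in S(h^2 m_1 m_2,g)$, which follows directly from truncating the expansion after $|\alpha|\le 1$ and observing that the $|\alpha|=0$ terms $bc$ and $cb$ cancel, while the $|\alpha|=1$ terms assemble into $\frac{1}{i}\{b,c\}$. No cancellation at order $h^2$ is needed or used. With that clarification, the sketch is correct.
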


The operators in $S(\nu^j\mu^k,g)$ act on Sobolev spaces adapted to the class of symbol. Let
$b\in S(\nu^j\mu^k,g)$, where $\mu $ and
$g$ are defined by \eqref{eq:metric}. Then there exists $C>0$ such that
\begin{equation*}
\| \Op(b) u\|_{\R^d}\le C\|\nu^j \Op(\mu^k) u  \|_{\R^d},  \quad  \forall \;  u\in\S(\R^d).
\end{equation*}
By symbolic calculus, the above estimate is equivalent to $\Op(\mu^{-k}\nu^{-j})\Op(b)$ acts on $L^2(\R^d)$
since
the operators associated with symbol in $S(1,g) $  act on $L^2(\R^d)$.
In particular, if $b\in S(\nu^j \mu,g)$,    then for any $\lambda\ge \lambda_0 ,$ $ \tau \ge \max\{\,{\cal C}  \lambda , \; 1\,\} $
and  $u\in\S(\R^d), $ it holds
$$
\| \Op(b) u\|_{\R^d}\le C\tau \|\nu^j  u \|_{\R^d}  +C \| \nu^j  D u \|_{\R^d},
$$
where $C>0$ depends on positive constants $\lambda_0$ and ${\cal C}.$


%
%
\subsection{Commutator estimate}
In this subsection, we suppose that $\lambda =1$  since the symbol does not depend on $\lambda$. The  metric in  \eqref{eq:metric} becomes
\begin{equation}
\label{eq:metric1}
\tilde{g}=  dx^2+\mu^{-2}d\xi^2, \quad \;  \mbox{ where  }  \;  \mu  \;  \mbox{ is defined by \eqref{eq:metric}.}
\end{equation}

To get the commutator estimate, we shall use the following    G\aa rding inequality (\cite[Theorem 18.6.7]{Hormander83}).
\begin{lemma}\label{lemma-Garding}    Let $b\in S(\mu^{2k},\tilde{g})$ be real valued. $\mu $ and
$\tilde{g}$ are defined by \eqref{eq:metric1}. We assume
there exists $C>0$ such that
$b(x,\xi,\tau)\ge C\mu^{2k}$. Then there exist  $\widetilde{C}>0$ and $\tau_0>0$ such that
\begin{equation}\label{g-inequality}
\Re(\Op(b) w\,|\,w)_{\R^d} \ge \widetilde{C}\| \Op(\mu^k)w\|_{\R^d}^2 , \qquad \forall \; w\in\S(\R^d)\text{ and }\tau\ge \tau_0.
\end{equation}
\end{lemma}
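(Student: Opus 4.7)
The plan is to prove this strict (not sharp) Gårding inequality by the classical square-root argument, which is how Hörmander establishes Theorem 18.6.7 cited in the statement. Because the hypothesis is strict positivity $b \ge C\mu^{2k}$ rather than mere non-negativity, the deeper Fefferman--Phong machinery is not needed; a single symbolic factorization suffices, and all remainders will be of order $\mu^{2k-1}$ and therefore gain a power of $\tau^{-1}$ that allows them to be absorbed for $\tau$ large.

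First I would introduce the auxiliary symbol
$$
q(x,\xi,\tau) := \bigl(b(x,\xi,\tau) - \tfrac{C}{2}\mu^{2k}\bigr)^{1/2}.
$$
The inequality $b - \tfrac{C}{2}\mu^{2k} \ge \tfrac{C}{2}\mu^{2k}$ shows that the argument of the square root is bounded below by $(C/2)\mu^{2k} > 0$, so $q$ is smooth and satisfies $q \ge (C/2)^{1/2}\mu^k$. An induction on the order of differentiation, using Fa\`a di Bruno's formula together with the symbol bounds on $b$ and on $\mu^{2k}$, then yields $q \in S(\mu^k, \tilde{g})$, uniformly in $\tau$. This is the one genuinely delicate step: the strict lower bound on $b - \tfrac{C}{2}\mu^{2k}$ is consumed at every differentiation in order to keep the inductive estimate closed.

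Next I would apply the symbolic calculus of Section 2.1 (with $\lambda = 1$). By Lemma \ref{lemma-remainder1}, $\Op(q)^* = \Op(q) + R_1$ with $R_1 \in S(\mu^{k-1},\tilde{g})$, since $q$ is real. By Lemma \ref{lemma-remainder2}(i), $\Op(q)\Op(q) = \Op(q^2) + R_2$ with $R_2 \in S(\mu^{2k-1},\tilde{g})$, and $R_1\Op(q) \in S(\mu^{2k-1},\tilde{g})$ as well. Combining these and using $q^2 = b - \tfrac{C}{2}\mu^{2k}$ gives the operator identity
$$
\Op(b) \;=\; \tfrac{C}{2}\Op(\mu^{2k}) \;+\; \Op(q)^*\Op(q) \;+\; R, \qquad R \in S(\mu^{2k-1},\tilde{g}).
$$
A second application of Lemma \ref{lemma-remainder2}(i) gives $\Op(\mu^{2k}) = \Op(\mu^k)^*\Op(\mu^k) + R_3$ with $R_3 \in S(\mu^{2k-1},\tilde{g})$. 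Pairing with $w$, taking real parts, and discarding the non-negative term $\|\Op(q)w\|_{\R^d}^2$, I arrive at
$$
\Re\bigl(\Op(b)w \,|\, w\bigr)_{\R^d} \;\ge\; \tfrac{C}{2}\|\Op(\mu^k)w\|_{\R^d}^2 \;-\; \bigl|\Re\bigl(\widetilde{R}\,w\,|\,w\bigr)_{\R^d}\bigr|
$$
for some $\widetilde{R} \in S(\mu^{2k-1},\tilde{g})$. To close the estimate, I exploit that $\mu \ge \tau$, so that $\mu^{2k-1} \le \tau^{-1}\mu^{2k}$ pointwise on symbols; because $\mu$ depends only on $(\xi,\tau)$, $\Op(\mu^s)$ is a Fourier multiplier and Plancherel immediately gives $|\Re(\widetilde{R}w|w)| \le C'\tau^{-1}\|\Op(\mu^k)w\|_{\R^d}^2$. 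Choosing $\tau_0$ so large that $C'/\tau_0 \le C/4$ absorbs the remainder and yields \eqref{g-inequality} with $\widetilde{C} = C/4$. The only step I expect to require real care is the symbol-class membership $q \in S(\mu^k,\tilde{g})$; all subsequent manipulations are routine invocations of the calculus developed in Section 2.1.
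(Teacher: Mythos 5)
Your proof is correct in substance, but it is worth noting that the paper does not actually prove this lemma: it invokes it as a black box from H\"ormander's Theorem~18.6.7 (which is the Fefferman--Phong inequality, a sharper statement than needed here). Your square-root factorization is the appropriate elementary argument for the strict-positivity hypothesis $b\ge C\mu^{2k}$, and you are right that the delicate part is verifying $q=(b-\tfrac{C}{2}\mu^{2k})^{1/2}\in S(\mu^k,\tilde g)$ uniformly in $\tau$; the Fa\`a di Bruno bookkeeping you sketch does close, since each differentiation trades a power $f^{-1/2}\lesssim\mu^{-k}$ against the lower bound $f\ge\tfrac{C}{2}\mu^{2k}$. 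The operator identity $\Op(b)=\tfrac{C}{2}\Op(\mu^{2k})+\Op(q)^*\Op(q)+R$ with $R\in S(\mu^{2k-1},\tilde g)$ then follows from Lemmas~\ref{lemma-remainder1} and~\ref{lemma-remainder2} exactly as you say (and in fact $\Op(\mu^{2k})=\Op(\mu^k)^2$ with no remainder at all, since $\mu^k$ is a Fourier multiplier).

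The one step that needs rewording is the absorption of the remainder. Your sentence attributes the bound $|\Re(\widetilde R w\,|\,w)|\lesssim\tau^{-1}\|\Op(\mu^k)w\|^2$ to ``Plancherel for $\Op(\mu^s)$,'' but $\widetilde R$ has an $x$-dependent symbol, so Plancherel alone does not apply. The correct justification is internal to the calculus: since $\mu\ge\tau$, the symbol $\tau\,\widetilde r$ of $\tau\widetilde R$ lies in $S(\mu^{2k},\tilde g)$ with seminorms uniform in $\tau$; composing with $\Op(\mu^{-k})$ on both sides produces an operator with symbol in $S(1,\tilde g)$ uniformly, hence uniformly bounded on $L^2(\R^d)$, and this yields $|(\tau\widetilde R w\,|\,w)|\lesssim\|\Op(\mu^k)w\|^2$, i.e.\ exactly the $\tau^{-1}$ gain you need. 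With that correction, choosing $\tau_0$ so that the implied constant times $\tau_0^{-1}$ is at most $C/4$ gives \eqref{g-inequality} with $\widetilde C=C/4$, as you conclude.
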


\begin{definition}
\label{def: subelliptic}
Let $V$ be a bounded open set in $\R^d$. We say that the weight
 function $\varphi \in \Cinf(\R^d;\R)$ satisfies the  {\bf
   sub-ellipticity} condition in $\overline{V}$ if $\,|\,\nabla \varphi\,|\,>0$ in $\overline{V}$
 and  there exists constant $C>0$,
  \begin{equation}
    \label{eq: sub-ellipticity}
  \mathbbm{p}_\varphi(x,\xi,\tau)=0,\;\; \forall\, (x,\xi) \in \overline{V} \times\R^d,   \ \,\tau>0
   \quad
   \Rightarrow\quad  \{\mathbbm{q}_2,\mathbbm{q}_1\}(x,\xi,\tau) \geq C  (\,|\,\xi\,|^2\,+\tau^2)^{3\slash2},
 \end{equation}
 where $\mathbbm{p}_\varphi(x,\xi,\tau)=|\xi+i\tau\nabla\varphi (x)|^2=\mathbbm{q}_2(x,\xi,\tau)+i\mathbbm{q}_1(x,\xi,\tau)$ and $\mathbbm{q}_1,\;\mathbbm{q}_2$ are real valued.
 \end{definition}


\begin{lemma}\label{lemma-sub-ellip}
 (\cite{Hormander83}) Let $V$ be a bounded open set in $\mathbb{R}^d$
and $\psi \in \Con^\infty(\mathbb{R}^d;\mathbb{R})$  be such that $\,|\,\nabla\psi\,|\, > 0$ in $\overline{V}$. Then, for $\gamma>0$  sufficiently large, $\varphi = e^{\gamma \psi}$
fulfills the sub-ellipticity property in $V$.
\end{lemma}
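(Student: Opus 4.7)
The plan is to derive the sub-ellipticity condition by direct computation of the Poisson bracket $\{\mathbbm{q}_2,\mathbbm{q}_1\}$ for the specific weight $\varphi=e^{\gamma\psi}$ and then to exploit the fact that powers of $\gamma$ redistribute favorably among the terms. First I would compute the principal symbol, writing $\mathbbm{p}_\varphi(x,\xi,\tau)=|\xi|^2-\tau^2|\nabla\varphi|^2+2i\tau\,\xi\cdot\nabla\varphi$, so that $\mathbbm{q}_2=|\xi|^2-\tau^2|\nabla\varphi|^2$ and $\mathbbm{q}_1=2\tau\,\xi\cdot\nabla\varphi$. A direct differentiation gives
\begin{equation*}
\{\mathbbm{q}_2,\mathbbm{q}_1\}
= 4\tau\,\varphi''(\xi,\xi)+4\tau^3\,\varphi''(\nabla\varphi,\nabla\varphi),
\end{equation*}
where $\varphi''(u,v)=\sum_{j,k}u_jv_k\partial_{x_jx_k}\varphi$ is the Hessian form.

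Next I would substitute $\varphi=e^{\gamma\psi}$. Since $\nabla\varphi=\gamma e^{\gamma\psi}\nabla\psi$ and $\partial_{x_jx_k}\varphi=e^{\gamma\psi}(\gamma^2\psi_{x_j}\psi_{x_k}+\gamma\psi_{x_jx_k})$, the bracket expands as
\begin{equation*}
\{\mathbbm{q}_2,\mathbbm{q}_1\}
=4\tau\gamma^2 e^{\gamma\psi}(\nabla\psi\!\cdot\!\xi)^2
+4\tau\gamma e^{\gamma\psi}\psi''(\xi,\xi)
+4\tau^3\gamma^4 e^{3\gamma\psi}|\nabla\psi|^4
+4\tau^3\gamma^3 e^{3\gamma\psi}\psi''(\nabla\psi,\nabla\psi).
\end{equation*}
Restricting to the characteristic set $\mathbbm{p}_\varphi=0$, the condition $\mathbbm{q}_1=0$ forces $\nabla\psi\!\cdot\!\xi=0$, which kills the first term, while $\mathbbm{q}_2=0$ yields the identity $|\xi|^2=\tau^2\gamma^2 e^{2\gamma\psi}|\nabla\psi|^2$, which will be used as the normalization.

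The core of the argument is then to show that the positive term $4\tau^3\gamma^4 e^{3\gamma\psi}|\nabla\psi|^4$ dominates the other two. Using $|\nabla\psi|\ge c_0>0$ on the compact set $\overline V$, the boundedness of $\psi''$ and of $e^{\gamma\psi}$, the third remaining term is controlled by $C\tau^3\gamma^3 e^{3\gamma\psi}|\nabla\psi|^2$, which is absorbed by the leading term as soon as $\gamma c_0^2\ge 2C$. The Hessian term $4\tau\gamma e^{\gamma\psi}\psi''(\xi,\xi)$ is bounded by $C\tau\gamma|\xi|^2 = C\tau^3\gamma^3 e^{2\gamma\psi}|\nabla\psi|^2$ on the characteristic set, and is likewise absorbed for $\gamma$ large. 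Hence on the characteristic set one gets $\{\mathbbm{q}_2,\mathbbm{q}_1\}\ge 2\tau^3\gamma^4 e^{3\gamma\psi}|\nabla\psi|^4$. Finally I would convert this lower bound into one of the form $C(\tau^2+|\xi|^2)^{3/2}$: the characteristic relation gives $\tau^2+|\xi|^2=\tau^2(1+\gamma^2 e^{2\gamma\psi}|\nabla\psi|^2)\le C_\gamma\tau^2$ (uniformly on $\overline V$), so $(\tau^2+|\xi|^2)^{3/2}\le C_\gamma'\tau^3$, and for $\gamma$ fixed large the bound follows.

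The main obstacle is the bookkeeping of the $\gamma$-powers together with the $(\xi,\tau)$-weights: one must check that the leading term dominates \emph{both} the Hessian-in-$\xi$ term (which is of lower order in $\gamma$ but carries the potentially large factor $|\xi|^2$) and the Hessian-in-$\nabla\psi$ term (which is of the same size in $\tau$ but one power of $\gamma$ smaller). The key observation that makes the estimate work is that the constraint $\mathbbm{q}_2=0$ trades $|\xi|^2$ for $\tau^2\gamma^2 e^{2\gamma\psi}|\nabla\psi|^2$, which turns a seemingly $\xi$-dependent error into a term comparable with $\tau^3\gamma^3$ that is beaten by $\tau^3\gamma^4$. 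The hypothesis $|\nabla\psi|>0$ on $\overline V$ is used precisely at this point to provide a uniform positive lower bound for $|\nabla\psi|^4$.
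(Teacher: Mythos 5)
Your proof is correct and is exactly the classical argument (Hörmander's pseudoconvexity computation for exponential weights): compute $\{\mathbbm{q}_2,\mathbbm{q}_1\}=4\tau\varphi''(\xi,\xi)+4\tau^3\varphi''(\nabla\varphi,\nabla\varphi)$, expand with $\varphi=e^{\gamma\psi}$, use the characteristic relations $\xi\cdot\nabla\psi=0$ and $|\xi|^2=\tau^2\gamma^2e^{2\gamma\psi}|\nabla\psi|^2$ to reduce everything to powers of $\tau^3$, and let the $\gamma^4$ term beat the $\gamma^3$ errors. The paper does not include a proof of this lemma (it is simply cited from \cite{Hormander83}), so there is nothing to contrast; the only minor point is that the constant in your bound $C\tau\gamma|\xi|^2$ quietly absorbs an $e^{\gamma\psi}$ factor, which is harmless on the compact set $\overline V$ but would be cleaner to keep explicit so that the largeness condition on $\gamma$ is visibly independent of $\gamma$.
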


\begin{lemma}\label{lemma-classical}
Assume that $\varphi $ satisfies the sub-ellipticity in Definition~\ref{def: subelliptic}.
For all  $w\in\Con_0^\infty(V)$,
there exist $C_1,C_2>0$ and $\tau_0>0$ such that the following inequality holds for all $\tau\ge\tau_0$,
\begin{equation}
\label{Classical-commutator-estimate}
\begin{array}{lcl}
C_1\tau^3\| w \|_{V}^2   + C_1\tau \| D w \|_{V}^2
 & \le&
   \Re \big( \Op (\{ |\xi|^2 -\tau^2|\nabla\varphi (x)|^2,\, 2\tau \xi\cdot\nabla\varphi (x) \}) w \, \,|\,\, w\big)_{V}
   \\ \noalign{\medskip} & &
  + \,C_2\tau^{-1}  \|\Op( |\xi|^2 -\tau^2|\nabla\varphi (x)|^2  )  w\|_{V}^2
\\ \noalign{\medskip}  && +\,C_2\tau ^{-1}   \|\Op( 2\tau \xi\cdot\nabla\varphi (x)  ) w \|_{V}^2 .
\end{array}
\end{equation}
\end{lemma}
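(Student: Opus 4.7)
The plan is to deduce (\ref{Classical-commutator-estimate}) from a pointwise symbolic inequality, combined with the G\aa rding inequality (Lemma \ref{lemma-Garding}) and the symbolic calculus of Section \ref{Sec: Symbol}. Setting $\mathbbm{q}_2 = |\xi|^2 - \tau^2|\nabla\varphi|^2$ and $\mathbbm{q}_1 = 2\tau\xi\cdot\nabla\varphi$, one has $\mathbbm{q}_j \in S(\mu^2, \tilde g)$, $\{\mathbbm{q}_2,\mathbbm{q}_1\} \in S(\mu^3, \tilde g)$, and $\mu^{-1}\mathbbm{q}_j^2 \in S(\mu^3, \tilde g)$. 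The heart of the argument is to prove that, for a sufficiently large constant $K>0$,
\[
b(x,\xi,\tau) := \{\mathbbm{q}_2, \mathbbm{q}_1\}(x,\xi,\tau) + K\mu^{-1}\bigl(\mathbbm{q}_1^2 + \mathbbm{q}_2^2\bigr)(x,\xi,\tau) \ge c_0\, \mu^3
\]
uniformly on $\overline V \times \R^d$, for some $c_0 > 0$.

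To establish this symbolic lower bound I would exploit the joint homogeneity of the symbols in $(\xi,\tau)$: both $\mathbbm{q}_j$ are homogeneous of degree $2$, while $\{\mathbbm{q}_2,\mathbbm{q}_1\}$, $\mu^{-1}\mathbbm{q}_j^2$ and $\mu^3$ are homogeneous of degree $3$. Hence it suffices to verify $b \ge c_0$ on the compact set $\overline V \times \{(\xi,\tau) : |\xi|^2 + \tau^2 = 1,\ \tau \ge 0\}$. On the closed subset where $\mathbbm{q}_1 = \mathbbm{q}_2 = 0$, the sub-ellipticity hypothesis (\ref{eq: sub-ellipticity}) yields $\{\mathbbm{q}_2,\mathbbm{q}_1\} \ge C > 0$; this bound propagates to a neighborhood by continuity, and outside that neighborhood $\mathbbm{q}_1^2 + \mathbbm{q}_2^2$ is bounded below by a positive constant. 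Choosing $K$ large enough to dominate any negative values of $\{\mathbbm{q}_2,\mathbbm{q}_1\}$ there then gives the uniform positivity of $b$ on the sphere, and homogeneity extends the estimate to all of $\overline V \times \R^d$.

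With $b \in S(\mu^3, \tilde g)$ and $b \ge c_0 \mu^3$ in hand, Lemma \ref{lemma-Garding} applied with $k = 3/2$ produces
\[
\Re(\Op(\{\mathbbm{q}_2,\mathbbm{q}_1\})w\,|\,w)_V + K\,\Re(\Op(\mu^{-1}(\mathbbm{q}_1^2+\mathbbm{q}_2^2))w\,|\,w)_V \ge \widetilde C\, \|\Op(\mu^{3/2})w\|_V^2.
\]
Since $\mu^3 \ge \tau(\tau^2 + |\xi|^2) = \tau^3 + \tau|\xi|^2$ pointwise, Plancherel applied to the Fourier multiplier $\mu^{3/2}$, combined with Lemma \ref{lemma-remainder1} to identify $\|\Op(\mu^{3/2})w\|^2$ with $(\Op(\mu^3)w\,|\,w)$ up to a remainder in $S(\mu^2,\tilde g)$, yields $\|\Op(\mu^{3/2})w\|_V^2 \gtrsim \tau^3\|w\|_V^2 + \tau\|Dw\|_V^2$ modulo terms of order $\tau^2\|w\|_V^2 + \|Dw\|_V^2$. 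For the second pairing on the left, Lemma \ref{lemma-remainder2} factorises $\Op(\mu^{-1}\mathbbm{q}_j^2) = \Op(\mathbbm{q}_j)\Op(\mu^{-1})\Op(\mathbbm{q}_j) + R_j$ with $R_j \in \Op S(\mu^2, \tilde g)$; pairing with $w$ and using $\mu^{-1} \le \tau^{-1}$ as a Fourier multiplier then gives
\[
(\Op(\mu^{-1}(\mathbbm{q}_1^2+\mathbbm{q}_2^2))w\,|\,w)_V \le \tau^{-1}\bigl(\|\Op(\mathbbm{q}_1)w\|_V^2 + \|\Op(\mathbbm{q}_2)w\|_V^2\bigr) + r(w),
\]
with remainder $r(w) \lesssim \tau^2\|w\|_V^2 + \|Dw\|_V^2$. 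Rearranging and absorbing all sub-principal remainders into the main $\tau^3\|w\|_V^2 + \tau\|Dw\|_V^2$ for $\tau \ge \tau_0$ large produces (\ref{Classical-commutator-estimate}).

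The principal difficulty lies in the symbolic lower bound: the compactness/homogeneity argument is the only place where the geometric content of sub-ellipticity is used, and some care is needed to keep the constants $K$ and $c_0$ uniform in $x \in \overline V$ (and independent of $\tau$). The remainder of the proof is essentially bookkeeping---tracking symbol classes through the pseudo-differential compositions via Lemmas \ref{lemma-remainder1}--\ref{lemma-remainder2}, and checking that every remainder produced is strictly of lower order than $\tau^3\|w\|^2 + \tau\|Dw\|^2$, so that it can be absorbed by choosing $\tau_0$ sufficiently large.
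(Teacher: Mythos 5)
Your proof is correct and follows essentially the same strategy as the paper: establish a pointwise symbolic lower bound via homogeneity, compactness, and the sub-ellipticity hypothesis, then apply the G\aa rding inequality and clean up the squared terms by pseudo-differential bookkeeping. The only cosmetic difference is that you work directly with the degree-3 inequality $\{\mathbbm{q}_2,\mathbbm{q}_1\}+K\mu^{-1}(\mathbbm{q}_1^2+\mathbbm{q}_2^2)\ge c_0\mu^3$ and invoke G\aa rding at order $k=3/2$, whereas the paper proves a degree-2 inequality with the $\tau$-free symbol $2\xi\cdot\nabla\varphi$ in place of $\mathbbm{q}_1=2\tau\xi\cdot\nabla\varphi$, applies G\aa rding at order $k=1$, and then multiplies through by $\tau$; the two normalizations are interchangeable.
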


\begin{proof}
First, by homogeneity  in $(\xi,\tau)$, compactness arguments and sub-ellipticity condition, we claim that there  exist  constants $C,\, \delta>0$   such that
\begin{equation}
\label{Carleman1}
C\big[\,|\,2\xi\cdot \nabla\varphi (x)\,|\,^2+\mu^{-2} \,(\,|\xi|^2-\tau^2|\nabla \varphi(x)|^2\,)\,^2
\big]+\big\{|\xi|^2 -\tau^2|\nabla\varphi(x)|^2,\; 2 \xi\cdot\nabla\varphi(x) \big\}\ge\delta \mu^2.
\end{equation}
\noindent
The proof of~\eqref{Carleman1} is classical.
In fact, set $${\cal K}=\{ (x,\xi,\tau) \in  {\mathbb R}^d\times {\mathbb R}^d\times {\mathbb R} \; :\; x\in\overline{V},\ \,|\,\xi\,|\,^2+\tau^2=1, \ \tau\ge0\}, $$
and for $ (x,\xi,\tau)\in {\cal K},\,\, \kappa>0,$
$$
    G(x,\xi,\tau,\kappa)=\kappa \big[\,|\,2\xi\cdot\nabla\varphi(x)\,|\,^2+\mu^{-2} \,(\,|\xi|^2-\tau^2|\nabla\varphi(x)|^2\,)\,^2
\big]+\{|\xi|^2 -\tau^2|\nabla\varphi(x)|^2, \;2 \xi\cdot\nabla\varphi(x) \}.$$

  \noindent
  If $\,|\,2\xi\cdot\nabla\varphi(x)\,|\,^2+\mu^{-2} \,(\,|\xi|^2-\tau^2|\nabla\varphi(x)|^2\,)\,^2=0$ for $(x,\xi,\tau)\in {\cal K}$,  it is clear that there exists a positive constant $\delta$
 such that \eqref{Carleman1} holds due to the fact that $\phi$ is sub-elliptic. When $\,|\,2\xi\cdot\nabla\varphi(x)\,|\,^2+\mu^{-2} \,(\,|\xi|^2-\tau^2|\nabla\varphi(x)|^2\,)\,^2>0$, there exists a positive constant $\kappa_{x,\xi,\tau}  $  such that
$G(x,\xi,\tau,\kappa)>0$ for every $\kappa\ge \kappa_{x,\xi,\tau}$  since $\{|\xi|^2 -\tau^2|\nabla\varphi(x)|^2, \;2 \xi\cdot\nabla\varphi(x) \}$
is bounded on ${\cal K}$. By continuity of $G(x,\xi,\tau,\kappa)$, there exists a   neighborhood of $(x,\xi,\tau)$, denoted by $V_{x,\xi,\tau}$,
such that $G(x,\xi,\tau,\kappa)>0$ for all  $(x,\xi,\tau)\in V_{x,\xi,\tau}$ and $\kappa\ge \kappa_{x,\xi,\tau}$. Since $ {\cal K}$ is  compact, there exist    finite sets  $V_j=V_{x_j,\xi_j,\tau_j}$ and corresponding constants $ \kappa_j= \kappa_{x_j,\xi_j,\tau_j} \;( j=1,\,2,\,\cdots ,n)$, such that ${\cal K}\subset \cup_{j=1}^n V_j $ and $G(x,\xi,\tau,\kappa)>0$ for all $(x,\xi,\tau)\in V_j$ and $\kappa>\kappa_j$.
  Let $\tilde\kappa=\max \{\kappa_j\;:\: j=1,2,\cdots, n\}$. It follows that $G(x,\xi,\tau,\tilde\kappa)>0$ for all
$(x,\xi,\tau)\in {\cal K}$ and $\kappa\ge \tilde{\kappa} $.
Finally, using the compactness of  ${\cal K}$ again,  we conclude that there exists $\delta>0$ such that $G(x,\xi,\tau,\tilde\kappa)\ge \delta$.
Thus,  ~\eqref{Carleman1} is reached since  $g$ is a homogeneous function of degree 2 with respect to variables $ (\xi,\tau )$.

By G\aa rding inequality \eqref{g-inequality}, there exists a constant $C>0$ such that, for $\tau\ge\tau_0$ with $\tau_0$ sufficiently large,
\begin{equation}
\label{Carleman2}
\begin{array}{ll}
C  \| \Op(\mu) w \|_{V}^2\le & \Re \big(\Op\big( \,\,|\,2\xi\cdot \nabla\varphi (x)\,|\,^2+\mu^{-2} \,(\,|\xi|^2-\tau^2|\nabla \varphi(x)|^2\,)\,^2
\\ \noalign{\medskip} & \displaystyle
 \quad\;\;   +\big\{|\xi|^2 -\tau^2|\nabla\varphi(x)|^2,\; 2 \xi\cdot\nabla\varphi(x) \big\} \big)w\; \big|\, w\big)_{V}.
\end{array}
\end{equation}

\noindent
 Now we are going to   estimate the terms
$ \tau\Op(\,|\,2\xi\cdot \nabla\varphi (x)\,|\,^2)$ and $\tau\Op(\mu^{-2} \,(\,|\xi|^2-\tau^2|\nabla \varphi(x)|^2\,)\,^2)$.
Firstly, it follows from  Lemma \ref{lemma-remainder2} that
\begin{equation}
\label{Carleman3}
\tau^{-1}\Op(\,|\,2\tau\xi\cdot \nabla\varphi (x)\,|\,^2) = \tau^{-1}\Op(2\tau\xi\cdot \nabla\varphi (x))^*\Op(2\tau\xi\cdot \nabla\varphi (x))+\tau\Op(r_1),
\end{equation}
where $r_1\in S(\mu,\tilde{g})$ and $\tilde{g} $ is defined by \eqref{eq:metric1}.
Therefore, for any $\varepsilon>0$, there exists a positive constant $C_\varepsilon$ such that
\begin{equation}\label{Carleman61}\begin{array}{ll}&
\big|\big( \tau^{-1}\Op(\,|\,2\tau\xi\cdot \nabla\varphi (x)\,|\,^2)  w\;|\;w\big)_{V} \big|
\\ \noalign{\medskip} \le &  \tau^{-1}\| \Op(  2\tau\xi\cdot \nabla\varphi (x))w\|_{V}^2+ \tau \,|\,( \Op(r_1)w\,|\,w)_{V}\,|\,
\\ \noalign{\medskip} \le &   \tau^{-1}\|  \Op(  2\tau\xi\cdot \nabla\varphi (x))w\|_{V}^2+
 \varepsilon \, \tau \| \Op(\mu )w \|_{V}^2 +C_\varepsilon\tau\| w \|_{V}^2 .
\end{array}\end{equation}

\noindent
Substituting \eqref{Carleman61} into \eqref{Carleman2}
and choosing  $\varepsilon $ small enough, we have  
\begin{equation}
\label{Carleman7}\begin{array}{ll}
& C\tau\| \Op(\mu) w \|_{V}^2
\\ \noalign{\medskip}
 \le  &   \Re \big(\Op\big( \tau\mu^{-2} \,(\,|\xi|^2-\tau^2|\nabla \varphi(x)|^2\,)\,^2
  +\tau\big\{|\xi|^2 -\tau^2|\nabla\varphi(x)|^2,\; 2 \xi\cdot\nabla\varphi(x) \big\} \big)w\; \big|\, w\big)_{V}
  \\ \noalign{\medskip}  &
  +\tau^{-1}\|  \Op(  2\tau\xi\cdot \nabla\varphi (x))w\|_{V}^2+C_\varepsilon\tau\| w \|_{V}^2 .
\end{array}\end{equation}

\noindent
Secondly,  by symbolic calculus, we have that
\begin{equation}\label{Carleman71}
\tau\Op(\mu^{-2}\,(\,|\xi|^2-\tau^2|\nabla \varphi(x)|^2\,)\,^2)   =  \tau \Op(r_0)\Op(|\xi|^2-\tau^2|\nabla \varphi(x)|^2)  +\tau\Op(r_2),
\end{equation}
where $r_0(x,\xi) =\mu^{-2}(|\xi|^2-\tau^2|\nabla \varphi(x)|^2)\in S(1,\tilde{g})$ and $r_2\in S(\mu,\tilde{g})$.
Therefore, for all $\varepsilon>0$, there exists  $C_\varepsilon>0$ such that
\begin{equation}\label{Carleman8}
\begin{array}{ll}&
\,|\, (\tau \Op(r_0)\Op(|\xi|^2-\tau^2|\nabla \varphi(x)|^2) w  \,|\, w )_{V} \,|\,
\\ \noalign{\medskip} \le&
  C_\varepsilon\tau^{-1}\| \Op(|\xi|^2-\tau^2|\nabla \varphi(x)|^2) w  \|_{V}^2+  \varepsilon \tau ^3
\| w \|_{V}^2.
\end{array}\end{equation}
We choose $\varepsilon$ small enough and combine \eqref{Carleman71}-\eqref{Carleman8} with \eqref{Carleman7} to get
\begin{equation}
\label{Carleman9}\begin{array}{ll}
 C\tau\| \Op(\mu) w \|_{V}^2
  \le  &   \Re\big( \tau\Op\big\{|\xi|^2 -\tau^2|\nabla\varphi(x)|^2,\; 2 \xi\cdot\nabla\varphi(x) \big\} w\; \big|\, w\big)_{V}
  \\ \noalign{\medskip} & \displaystyle +C_\varepsilon\tau^{-1}\| \Op(|\xi|^2-\tau^2|\nabla \varphi(x)|^2) w  \|_{V}^2
  \\ \noalign{\medskip} &
  + \tau^{-1}\| \Op(2\tau\xi\cdot\nabla\varphi(x)) w\|_{V}^2+C_\varepsilon(\tau+ \varepsilon \tau ^3)\| w \|_{V}^2 .
\end{array}\end{equation}

\noindent
Finally, it is clear that there exist positive constant $C$ such that
\begin{equation}
\label{Carleman4}
 \tau^3\| w \|_{V}^2   +  \tau \| D w \|_{V}^2   \le C\tau \| \Op(\mu) w \|_{V}^2.
\end{equation}
Thus, we obtain \eqref{Classical-commutator-estimate} by using \eqref{Carleman9}-\eqref{Carleman4}, choosing $\varepsilon$ small enough and letting $\tau>\tau_0$ large enough.
\end{proof}


%
%

\section{Carleman Estimate}
\setcounter{equation}{0} \setcounter{theorem}{0}
In this section, we shall prove several Carleman inequalities.
%
  Define the operator  $$P(x,D,\lambda)=D^2+i\lambda D a(x)D-\lambda^2.$$ Let the weight function $\varphi \in \Cinf(\R^d;\R)$.
The associated conjugate operator of $P(x,D,\lambda)$ is  $P_\varphi(x,D,\lambda)= e^{\tau \varphi}P(x,D,\lambda) e^{-\tau \varphi}$. Then,
$$
P_\varphi=(D+i\tau\nabla\varphi (x))^2+i\lambda (D+i\tau\nabla\varphi (x)) a(x)(D+i\tau\nabla\varphi (x))-\lambda^2.
$$
By setting $Q_2=\frac12(P_\varphi + P_\varphi ^*)$  and  $Q_1=\frac1{2i}(P_\varphi - P_\varphi ^*)$, we have $P_\varphi=Q_2+iQ_1$. 
We denote by $p(x, \xi, \lambda), \;p_\varphi(x, \xi, \lambda)  $  the associated symbol of $P(x,D, \lambda),$ $P_\varphi(x,D, \lambda),$ respectively.

Let the metric $g$ and weight $\nu$ be defined by \eqref{eq:metric} and \eqref{eq:weight}.
Due to the results Remark \ref{remark-class1}, we know that $D+i\tau\nabla\varphi (x)$ is an operator with symbol in $S(\mu,g) $ class, $\lambda a$ is in $S(\nu,g)$, and $(1+i\lambda a(x)) |\xi+i\tau\nabla\varphi (x)|^2$,
the principal symbol of $P_\varphi $  belongs to $S(\nu\mu^2,g)$. Furthermore, from Lemma \ref{lemma-remainder1} and \ref{lemma-remainder2}, one has that
\begin{align}
&P_\varphi=\Op \big((1+i\lambda a(x)) |\xi+i\tau\nabla\varphi (x)|^2\big)-\lambda^2 +R_3,  \notag \\
&Q_2=\Op(q_2)-\lambda^2 +R_2 , \notag \\
& Q_1=\Op(q_1)+R_1, \label{eq: symbol  Q_j and remainders}
\end{align}
where $q_2=|\xi|^2-\tau^2|\nabla\varphi (x)|^2-2\lambda \tau a(x)\xi\cdot\nabla\varphi (x) $,~ $q_1= 2\tau \xi\cdot\nabla\varphi (x) +\lambda a(x)
(|\xi|^2-\tau^2|\nabla\varphi (x)|^2)$  belong  to $S(\nu\mu^2,g)$ and the symbols of $R_j$ is in $S(\lambda^{1\over2}\nu\mu,g)$ for $j=1,2,3$.
It is clear that
\begin{equation}\label{eq: Carleman commutator and square}
\|  P_\varphi v \|_V^2=\| Q_2 v\|_V^2 +\| Q_1 v\|_V^2+ 2\Re (Q_2 v\,|\, iQ_1 v)_V,
 \quad  \forall\; v
 \in \Cinf(V) .
\end{equation}

In what follows, several Carleman estimates are introduced.
First, we give an estimation on the subdomain which is far away from the boundary $\Gamma.$


 \begin{theorem}
\label{th-Carleman1}
Suppose $\varphi$ satisfies   sub-ellipticity condition in $V\subset \Omega$.
Then, there exist positive constants $C,  \;\tilde K$ and $\lambda_0$, such that for every  $v\in\Con_0^\infty(V)$,   it holds
\begin{equation}\label{Carleman estimate L2}
   \tau^3\| \nu(x) v\|_{V}^2   +   \tau   \|\nu(x)  D v \|_{V}^2  +  \| Q_1 v \|_{V}^2 +   \| Q_2 v \|_{V}^2
   \lesssim  \| P_\varphi v\|_{V}^2  ,
\end{equation}
where  $\lambda\ge \lambda_0$ and
$\tau\ge  \max\{\tilde K|\lambda|^{5\over4},\,1\}$.
\end{theorem}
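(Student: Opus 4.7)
The argument starts from the decomposition $P_\varphi = Q_2 + iQ_1$ and the identity \eqref{eq: Carleman commutator and square},
\[
\|P_\varphi v\|_V^2 = \|Q_2 v\|_V^2 + \|Q_1 v\|_V^2 + 2\Re(Q_2 v\,|\,iQ_1 v)_V.
\]
Since the squared norms $\|Q_j v\|_V^2$ already appear on the left-hand side of \eqref{Carleman estimate L2}, the task reduces to bounding the cross term from below, modulo errors absorbable into $\|P_\varphi v\|_V^2$, by a multiple of $\tau^3\|\nu v\|_V^2 + \tau\|\nu Dv\|_V^2$. By Lemmas~\ref{lemma-remainder1}--\ref{lemma-remainder2} the cross term equals $(\Op(\{q_2,q_1\})v\,|\,v)_V$ plus lower-order remainders, so the heart of the proof is a pointwise analysis of the Poisson bracket $\{q_2,q_1\}$.

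I decompose the symbols as $q_2 = \mathbbm{q}_2 - \lambda a(x)\mathbbm{q}_1$ and $q_1 = \mathbbm{q}_1 + \lambda a(x)\mathbbm{q}_2$, where $\mathbbm{q}_2 = |\xi|^2 - \tau^2|\nabla\varphi|^2$ and $\mathbbm{q}_1 = 2\tau\,\xi\cdot\nabla\varphi$ are the classical Carleman symbols of Definition~\ref{def: subelliptic}. Expanding,
\[
\{q_2,q_1\} = \{\mathbbm{q}_2,\mathbbm{q}_1\} + \lambda\bigl(\{\mathbbm{q}_2,a\mathbbm{q}_2\} - \{a\mathbbm{q}_1,\mathbbm{q}_1\}\bigr) - \lambda^2\{a\mathbbm{q}_1,a\mathbbm{q}_2\}.
\]
For the principal term $\{\mathbbm{q}_2,\mathbbm{q}_1\}$, Lemma~\ref{lemma-classical} supplies the lower bound $\tau^3\|v\|_V^2 + \tau\|Dv\|_V^2$ modulo $\tau^{-1}\|\Op(\mathbbm{q}_j)v\|_V^2$; and Lemma~\ref{lemma-remainder2} lets me rewrite $\Op(\mathbbm{q}_j)v$ in terms of $Q_j v$ together with a $\lambda a$-correction, so the two right-hand error terms of Lemma~\ref{lemma-classical} are absorbed into $\|Q_j v\|_V^2$ up to manageable residues.

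The $\lambda$- and $\lambda^2$-pieces are handled by means of the key inequality \eqref{eq: estimate derivative positive function}, $|\nabla a|^2\le 2 a\|a''\|_{L^\infty(\Omega)}$, which each time a derivative falls on $a$ exchanges $|\nabla a|$ for $\sqrt{a}$; this is the same mechanism that places $\lambda a$ in $S(\nu,g)$ in Remark~\ref{remark-class1}(ii). The crucial observation is that the principal part of $-\lambda^2\{a\mathbbm{q}_1,a\mathbbm{q}_2\}$ is $\lambda^2 a^2\{\mathbbm{q}_2,\mathbbm{q}_1\}$, which combines with $\{\mathbbm{q}_2,\mathbbm{q}_1\}$ to produce the non-negative symbol $(1+\lambda^2 a^2)\{\mathbbm{q}_2,\mathbbm{q}_1\} = \nu^2\{\mathbbm{q}_2,\mathbbm{q}_1\}$. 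A G\aa rding step via Lemma~\ref{lemma-Garding} then promotes the flat $\tau^3\|v\|_V^2 + \tau\|Dv\|_V^2$ of Lemma~\ref{lemma-classical} to the weighted bound $\tau^3\|\nu v\|_V^2 + \tau\|\nu Dv\|_V^2$ required on the left of \eqref{Carleman estimate L2}.

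The main obstacle is the book-keeping of the remaining remainders generated by the symbolic calculus and by the non-principal pieces of $\lambda\{\mathbbm{q}_2,a\mathbbm{q}_2\}$, $\lambda\{a\mathbbm{q}_1,\mathbbm{q}_1\}$ and $\lambda^2\{a\mathbbm{q}_1,a\mathbbm{q}_2\}$. Each differentiation of $\lambda a$ costs a factor $\lambda^{1/2}$ by Remark~\ref{remark-class1}(ii), and the worst contributions, after application of \eqref{eq: estimate derivative positive function} and G\aa rding, yield errors of size $\lambda^{5/2}\tau\|v\|_V^2$; absorbing them into the positive $\tau^3\|v\|_V^2$ forces $\tau^3\gtrsim \lambda^{5/2}\tau$, i.e.\ $\tau\gtrsim \lambda^{5/4}$, which is exactly the threshold $\tau\ge \tilde K|\lambda|^{5/4}$ in the statement. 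Fixing $\tilde K$ and $\lambda_0$ large enough renders every remainder absorbable, and plugging the resulting lower bound on the cross term back into the identity above gives \eqref{Carleman estimate L2}.
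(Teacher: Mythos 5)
Your plan follows the paper's proof quite closely: you correctly identify the decomposition $q_2=\mathbbm q_2-\lambda a\,\mathbbm q_1$, $q_1=\mathbbm q_1+\lambda a\,\mathbbm q_2$, the appearance of $\nu^2\{\mathbbm q_2,\mathbbm q_1\}$ as the principal part of $\{q_2,q_1\}$, the role of $|\nabla a|^2\le 2a\|a''\|_\infty$, the absorption of the remaining $\mathbbm q_j$-factors into $\|Q_j v\|^2$, and the origin of the $\tau\gtrsim\lambda^{5/4}$ threshold in the $\lambda^{5/2}\tau$ error. These are exactly the steps of the paper.

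The one point that does not work as literally written is the handling of the $\nu$-weight. You propose to first apply Lemma~\ref{lemma-classical} to $v$ (obtaining the flat bound $\tau^3\|v\|^2+\tau\|Dv\|^2$), and then to "promote" it to the weighted bound by applying Lemma~\ref{lemma-Garding} to the symbol $\nu^2\{\mathbbm q_2,\mathbbm q_1\}$. This has two problems: (a) Lemma~\ref{lemma-Garding} is stated for symbols in the $\lambda=1$ class $S(\mu^{2k},\tilde g)$ and does not cover $S(\nu^2\mu^2,g)$; and (b) $\nu^2\{\mathbbm q_2,\mathbbm q_1\}$ is not non-negative on all of $\overline V\times\R^d$ — sub-ellipticity only gives positivity on the characteristic set, which is why inequality~\eqref{Carleman1} inserts the corrector $C(|2\xi\cdot\nabla\varphi|^2+\mu^{-2}(\cdots)^2)$ before G\aa rding. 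The paper's mechanism, which fixes both, is to push the weight onto the test function: one substitutes $w=\nu v$ directly into Lemma~\ref{lemma-classical}, obtaining $\tau^3\|\nu v\|^2+\tau\|D(\nu v)\|^2\lesssim(\Op(\{\mathbbm q_2,\mathbbm q_1\})\nu v\,|\,\nu v)+\tau^{-1}\|\Op(\mathbbm q_j)\nu v\|^2$, and then recognizes $\nu\,\Op(\{\mathbbm q_2,\mathbbm q_1\})\,\nu$ as the principal part of $i[Q_2,Q_1]$. After that substitution, your identity $q_2+\lambda a q_1=\nu^2\mathbbm q_2$, $q_1-\lambda a q_2=\nu^2\mathbbm q_1$ (the paper's~\eqref{eq: q-j versus classical terms}) converts $\Op(\mathbbm q_j)\nu v$ into $\nu^{-1}(Q_j+\cdots)v$ modulo remainders, which is where your "rewrite $\Op(\mathbbm q_j)v$ in terms of $Q_j v$" becomes precise. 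With that adjustment to the order of operations your proposal matches the paper's proof.
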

\begin{proof}
 Let $w =\nu(x) v$ in  \eqref{Classical-commutator-estimate}. We obtain
\begin{equation} \label{eq: commutator estimate}\begin{array}{ll}
&C_1\tau^3\| \nu(x) v\|_{V}^2   + C_1\tau \| D( \nu(x) v) \|_{V}^2
\\ \noalign{\medskip} \le & \Re\big( \Op (\big\{ |\xi|^2 -\tau^2|\nabla\varphi(x)|^2, 2\tau \xi\cdot\nabla\varphi (x)  \big\}) \nu(x) v  \,|\, \nu(x) v\big)_{V}  \\ \noalign{\medskip}
&  +\, C_2\tau^{-1}  \|\Op( |\xi|^2 -\tau^2|\nabla\varphi(x)|^2  ) \nu(x) v\|_{V}^2   \\\noalign{\medskip}
&   +\, C_2\tau ^{-1}   \|\Op( 2\tau \xi\cdot\nabla\varphi (x)  )\nu(x) v \|_{V}^2 , \;\; C_1,\,C_2>0.
\end{array}
\end{equation}

\noindent
 Since the symbol of $[D,\nu(x)]$ is in $S(\lambda^{1\over2}\nu,g)$, we have
\begin{equation*}
 \|\nu(x)  D v \|_{V}^2   \le \| D (\nu(x) v) \|_{V}^2 +   \| [D,\nu(x)] v \|_{V}^2
 \le  \| D (\nu(x) v) \|_{V}^2 + C \lambda  \| \nu(x) v \|_{V}^2.
\end{equation*}
Consequently, for $\tau\ge {\cal C}\lambda $ with ${\cal C}>0$ sufficiently large, it holds
\begin{equation} \label{eq: estimation by below}
\tau^3\| \nu(x) v\|_{V}^2   + \tau   \|\nu(x)  D v \|_{V}^2 \lesssim   \tau^3\| \nu(x) v\|_{V}^2   + \tau \| D (\nu(x) v )\|_{V}^2 .
\end{equation}
It follows from \eqref{eq: commutator estimate} and \eqref{eq: estimation by below} that
\begin{equation} \label{eq: commutator estimate0}
\begin{array}{ll} &
C_1'\big(\tau^3\| \nu(x) v\|_{V}^2   +  \tau \|   \nu(x) D v  \|_{V}^2\big)
 \\ \noalign{\medskip}   \le & \Re \big( \Op (\big\{ |\xi|^2 -\tau^2\varphi(x)^2, 2\tau \xi\cdot\nabla\varphi (x)  \big\}) \nu(x) v  \,|\, \nu(x) v\big)_{V}  \\ \noalign{\medskip}
&  + C_2\tau^{-1}  \|\Op( |\xi|^2 -\tau^2|\nabla\varphi(x)|^2  ) \nu(x) v\|_{V}^2  \\ \noalign{\medskip}
&   + C_2\tau ^{-1}   \|\Op( 2\tau \xi\cdot\nabla\varphi (x)  )\nu(x) v \|_{V}^2 , \;\; C_1',\,C_2>0.
\end{array}
\end{equation}

Now, we estimate the first term on the right side hand of \eqref{eq: commutator estimate0}. Note that
\begin{equation}\label{eq: real part and commutator}
2\Re (Q_2 v\,|\, iQ_1 v)_{V}= (Q_2 v\,|\, iQ_1 v)_{V}+ (  iQ_1  v\,|\, Q_2v)_{V}=( i[Q_2, Q_1 ] v \,|\, v )_{V},
\end{equation}
where   the principal symbol of $ i[Q_2, Q_1 ] $ is
$\{q_2, q_1\} $. Due to Lemma \ref{lemma-remainder2},   we obtain that
\begin{equation}
\label{eq:qq}
i[Q_2, Q_1 ] = \Op(\{  q_2, q_1  \}   ) +R_4,
\end{equation}
where $\{  q_2, q_1  \}  \in S(\lambda^{\frac12}\nu^2 \mu^3, g)$ and the symbol of $R_4$ is in $S(\lambda\nu^2 \mu^2, g)$.
A direct computation gives that
\begin{align*}
\big\{  q_2, q_1  \big\}&= (1+a^2(x) \lambda ^2) \big\{ |\xi|^2-\tau^2|\nabla\varphi(x)|^2  , 2 \tau \xi\cdot\nabla\varphi (x)  \big\} \\
&\quad +\big(  \big\{|\xi|^2-\tau^2|\nabla\varphi(x)|^2   , \lambda a(x)  \big\} -\lambda a(x) \big\{ 2\tau\xi.\nabla\varphi (x),\lambda a(x)\big\}
\big) (|\xi|^2-\tau^2|\nabla\varphi(x)|^2)  \\
&\quad - 2   \tau \xi\cdot\nabla\varphi (x)\big(  \lambda  a(x) \big\{\lambda a(x),|\xi|^2-\tau^2|\nabla\varphi(x)|^2\big\} +  \big\{\lambda a(x)  , 2\tau \xi\cdot\nabla\varphi (x)  \big\}
\big) .
\end{align*}

\noindent
From the definition of $q_1$ and $q_2$, we have
\begin{align}
&q_2(x,\xi)+\lambda a(x) q_1(x,\xi)=  ( 1+\lambda^2a(x)^2 ) (|\xi|^2-\tau^2|\nabla\varphi(x)|^2 ),\notag\\
&q_1(x,\xi)- \lambda a(x) q_2(x,\xi)  =     2\tau \xi\cdot\nabla\varphi (x)   ( 1+\lambda^2a^2 (x)).  \label{eq: q-j versus classical terms}
\end{align}

\noindent
Consequently,
\begin{equation*}
\begin{array}{ll}&
\big\{  q_2, q_1  \big\}
\\ \noalign{\medskip} =& \displaystyle
\nu^2(x) \big\{ |\xi|^2-\tau^2|\nabla\varphi(x)|^2  ,\, 2 \tau \xi\cdot\nabla\varphi (x)  \big\}
\\ \noalign{\medskip} & \displaystyle
   + \nu^{-2}(x)  (q_2(x,\xi)+\lambda a(x) q_1(x,\xi))            \big(  \big\{|\xi|^2-\tau^2|\nabla\varphi(x)|^2   ,
 \lambda a(x)  \big\} -\lambda a(x) \big\{ 2\tau\xi\cdot\nabla\varphi (x),\lambda a(x)\big\}
\big)
 \\ \noalign{\medskip} & \displaystyle
   - \nu^{-2}(x)   (q_1(x,\xi)- \lambda a(x) q_2(x,\xi) )
\big( \lambda  a(x) \big\{\lambda a(x),|\xi|^2-\tau^2|\nabla\varphi(x)|^2\big\} +  \big\{\lambda a(x)  , 2\tau \xi\cdot\nabla\varphi (x)  \big\}
\big)
.
\end{array}
\end{equation*}


\noindent
Then, it follows from  \eqref{eq:qq} and the above equation that
\begin{align*}
 i[Q_2, Q_1 ] &=  \nu(x) \Op \big( \big\{ |\xi|^2-\tau^2|\nabla\varphi(x)|^2  ,\, 2 \tau \xi\cdot\nabla\varphi (x)  \big\} \big) \nu(x) \notag \\
 &\quad +B_1\nu^{-1}(x)  \Op \big(q_2(x,\xi)+\lambda a(x) q_1(x,\xi) \big) \notag  \\
  &\quad -  B_2\nu^{-1}(x)  \Op \big  (q_1(x,\xi)- \lambda a(x) q_2(x,\xi) \big) + R_5,
\end{align*}
where
$$
\begin{array}{l}
B_1 = \Op\big(\nu^{-1}(x)
\big(  \big\{|\xi|^2-\tau^2|\nabla\varphi(x)|^2   , \lambda a(x)  \big\} -\lambda a(x) \big\{ 2\tau\xi\cdot\nabla\varphi (x),\lambda a(x)\big\}\big)\big),
 \\ \noalign{\medskip}   \displaystyle
 B_2 = \Op\big(\nu^{-1}(x)
    \big(  \lambda  a(x) \big\{\lambda a(x),|\xi|^2-\tau^2|\nabla\varphi(x)|^2\big\} +  \big\{\lambda a(x)  , 2\tau \xi\cdot\nabla\varphi (x)  \big\}
\big)\big),
\end{array}
$$
  symbols of $B_1$ and $B_2$ belong to $ S( \lambda ^{1\over2}\nu\mu ,g)$ and
 the symbol of $R_5$ is in $S(\lambda\nu^2\mu^2,g)$.
 Combining this with \eqref{eq: symbol  Q_j and remainders} yields
\begin{align}
 i[Q_2, Q_1 ] &=  \nu(x) \Op \big( \big\{ |\xi|^2-\tau^2|\nabla\varphi(x)|^2  , 2 \tau \xi\cdot\nabla\varphi (x)  \big\} \big) \nu(x) \notag \\
 &\quad +    B_1\nu^{-1}(x)    \big[Q_2(x,\xi)+\lambda a(x) Q_1(x,\xi)+\lambda^2\big]  \notag  \\
  &\quad -   B_2  \nu^{-1}(x)    \big[Q_1(x,\xi)- \lambda a(x) Q_2(x,\xi)-\lambda^3a(x) \big] + R_5,  \label{eq: commutator computation}
\end{align}


\noindent
 We refer to Section 2 where the rules on symbolic
 calculus are given and precise.
 Therefore, by the continuity of pseudo-differential operator,
we have that for   $j=1,2$, $k=0,1$ and $\ell=1,2$,
\begin{equation}\label{eq: estimate B_j}
\begin{array}{ll}&
\,|\,(B_j \nu(x)^{-1}  (\lambda a(x)) ^k Q_\ell v\,|\, v)_{V}\,|\,
\\ \noalign{\medskip}  = &\,|\,( \nu(x)^{-1}   (\lambda a(x)) ^kQ_\ell v\,|\,  B_j^*v)_V\,|\,\lesssim\, \| Q_\ell v \|_{V} \|  B_j^*v  \|_{V}
  \\ \noalign{\medskip} \le & \displaystyle {1\over10}  \| Q_\ell v \|_{V}^2 +C \lambda \tau ^2\|\nu(x) v\|_{V} ^2+C \lambda \|\nu(x)  Dv\|_{V} ^2
, \quad  C>0,
\end{array}
\end{equation}
\begin{align}
\,|\,(  B_j\nu^{-1}(x)\lambda^2(\lambda a(x))^k v\,|\, v  )_{V}   \,|\,&\,\lesssim\,\lambda^{5\over2}\big(\tau \|\nu(x) v \|_{V} +\|\nu(x) Dv \|_{V}  \big) \|\nu(x) v \|_{V} \notag\\ \noalign{\medskip}
&\,\lesssim\,\lambda^{5\over2}\tau \|\nu(x) v \|_{V} ^2+  \lambda^{5\over2}\tau^{-1} \|\nu(x) Dv \|_{V}^2,
 \label{eq: estimate non principally terms}
\end{align}
and
\begin{equation}\label{eq: estimate remainders}
\,|\,(R_5 v\,|\, v)_{V}\,|\,\lesssim \lambda \tau^2 \|\nu(x) v \|_{V}^2 + \lambda  \|\nu(x) Dv \|_{V}^2.
\end{equation}


\noindent
Due to~\eqref{eq: commutator computation}--\eqref{eq: estimate remainders},   there exists a positive constant $C$ such that
\begin{equation}\label{eq: estimate commutator}
\begin{array}{ll}
&\big( \Op (\big\{ |\xi|^2 -\tau^2|\nabla\varphi(x)|^2, 2\tau \xi\cdot\nabla\varphi (x)  \big\}) \nu(x) v  \,|\, \nu(x) v\big)_{V}\\  \noalign{\medskip}
\le& \displaystyle (i[Q_2,Q_1] v \,|\,v)_{V}
+ {1\over4}\sum\limits_{\ell=1,2}  \| Q_\ell v \|_{V}^2   +C\big(\lambda \tau ^2\|\nu(x) v\|_{V} ^2
\\\noalign{\medskip}
& + \lambda \|\nu(x)  Dv\|_{V} ^2    + \lambda^{5\over2}\tau \|\nu(x) v \|_{V} ^2+ \lambda^{5\over2}\tau^{-1} \|\nu(x) Dv \|_{V}^2\big).
\end{array}
\end{equation}

Next, we are going to estimate the last two terms on the right side hand of \eqref{eq: commutator estimate0}.
It follows from ~\eqref{eq: symbol  Q_j and remainders} and \eqref{eq: q-j versus classical terms}
that
\begin{equation}
\label{eq:q1}
\begin{array}{ll}&
\|\Op( |\xi|^2 -\tau^2|\nabla\varphi(x)|^2  ) \nu(x) v\|_{V}^2 +  \|\Op( 2\tau \xi\cdot\nabla\varphi (x) )\nu(x) v \|_{V}^2
\\ \noalign{\medskip}=
& \|\Op(\nu(x)^{-2}( q_2+\lambda a(x) q_1 ) \nu(x) v\|_{V}^2 +
\|\Op(\nu(x)^{-2}( q_1-\lambda a(x) q_2 ) \nu(x)  v \|_{V}^2
\\ \noalign{\medskip}=
& \| \nu(x)^{-1} \big[  Q_2+\lambda^2 - R_2+\lambda a(x) (Q_1-R_1 )\big] v\|_{V}^2
\\ \noalign{\medskip}
&+
\| \nu(x)^{-1}\big[ Q_1-R_1-\lambda a(x)(Q_2+\lambda^2 - R_2) \big] v \|_{V}^2.
\end{array}
\end{equation}

\noindent
Combining this with the fact that the symbols of $ R_j$ are in $S(\lambda^{1\over2}\nu \mu,g)$ for $j=1,2$, we have
\begin{equation} \label{eq: estimate by Q_j}
\begin{array}{ll}
&
\|\Op( |\xi|^2 -\tau^2|\nabla\varphi(x)|^2  ) \nu(x) v\|_{V}^2 +  \|\Op( 2\tau \xi\cdot\nabla\varphi (x) )\nu(x) v \|_{V}^2
\\ \noalign{\medskip}
\lesssim &\displaystyle
   \sum\limits_{\ell=1,2 }(\| Q_\ell  v  \|_{V}^2  +  \| R_\ell v  \|_{V} ^2 )   + \lambda^4 \|  v  \|_{V} ^2
 \\ \noalign{\medskip}
\lesssim &
 \sum\limits_{\ell=1,2 }\| Q_\ell  v  \|_{V}^2 +  \lambda \tau^2 \| \nu(x) v  \|_{V} ^2 +   \lambda\|\nu(x) D v  \|_{V} ^2   + \lambda^4 \|  v  \|_{V} ^2 .
 \end{array}\end{equation}


\noindent
Consequently, for $ \tau\ge {\cal C}\lambda$ with ${\cal C}$ large enough,   it holds
\begin{equation}\label{eq: estimate square}
\begin{array}{ll}&
   \tau^{-1}  \|\Op( |\xi|^2 -\tau^2|\nabla\varphi(x)|^2  ) \nu(x) v\|_{V}^2  +\tau ^{-1}   \|\Op( 2\tau \xi\cdot\nabla\varphi (x)  )\nu(x) v \|_{V}^2
\\ \noalign{\medskip}   \lesssim &\displaystyle
   \tau^{-1}\sum_{\ell=1,2 }\limits\| Q_\ell  v  \|_{V}^2 + \lambda \tau \|\nu(x)  v  \|_{V} ^2 +  \|\nu(x) D v  \|_{V} ^2   + \lambda^3 \|  v  \|_{V} ^2 .
\end{array}
\end{equation}


\noindent
Finally, by \eqref{eq: commutator estimate0}, \eqref{eq: estimate commutator} and \eqref{eq: estimate square}, one can choose $\tau\ge \max\{\tilde K|\lambda|^{5\over4},\, 1\}$
with $ \tilde K$ sufficiently
large  such that for some $C>0$,
\begin{equation}\label{eq: estimate combine}
\begin{array}{ll}
C \big( \tau^3\| \nu(x) v\|_{V}^2   +  \tau   \|\nu(x)  D v \|_{V}^2\big)
\le & \displaystyle  (i[Q_2,Q_1] v\,|\,v)_{V} +  {1\over2}\sum\limits_{\ell=1,2}  \| Q_\ell v \|_{V}^2
\\ \noalign{\medskip} &
+ \, \varepsilon  \tau^3\| \nu(x) v\|_{V}^2   +\varepsilon \tau   \|\nu(x)  D v \|_{V}^2,
\end{array}
\end{equation}
where $\varepsilon>0$ is arbitrary. Choosing $\varepsilon$ small with respect to
$C$, using \eqref{eq: Carleman commutator and square}, \eqref{eq: real part and commutator} and \eqref{eq: estimate combine},
we obtain
$$
\begin{array}{ll}&\displaystyle
C_1\big( \tau^3\| \nu(x) v\|_{V}^2   +   \tau   \|\nu(x)  D v \|_{V}^2\big) + {1\over2}\big(\| Q_1 v \|_{V}^2 +   \| Q_2 v \|_{V}^2\big)
\\ \noalign{\medskip}
  \le  & \displaystyle  (i[Q_2,Q_1] v,v)_V + \| Q_1 v \|_{V}^2 + \| Q_2 v \|_{V}^2= \| P_\varphi v\|_{V}^2  ,
\end{array}
$$
which implies~\eqref{Carleman estimate L2}.
\end{proof}
%
%

\vskip 4mm

\begin{remark}
The estimates in \eqref{eq: estimate non principally terms} impose the assumption $\tau\ge \tilde K |\lambda|^{5\over4}$. The other remainder terms only impose the
condition $\tau\ge {\cal C}\lambda$.  This condition is related with the principal normal condition. Indeed for a complex operator, with symbol $p_1+ip_2$    
where $p_1,p_2$ are both real valued, the Carleman estimate is only true if $\{ p_1,p_2 \}=0$ on $p_1=p_2=0$. Here the symbol of operator before
conjugation by weight is $|\xi|^2-\lambda^2+i\lambda a(x)|\xi|^2$, and the Poisson bracket is
$\{ |\xi|^2-\lambda^2, \lambda a(x)|\xi|^2 \}=2\lambda (\xi\cdot \nabla a(x))|\xi|^2$. We can  estimate this term, uniformly in a neighborhood
of $a(x)=0$, by $C\lambda a^{1\over2}(x)\,|\,\xi\,|\,^3$. This explanation does not justify the power $|\lambda|^{5\over4}$ found at the end of computations but
shows the difficulties.
\end{remark}

\begin{theorem}
\label{th-Carleman}
Suppose $\varphi$ satisfies   sub-ellipticity condition in $V\subset \Omega$.
Then, there exist positive constants $  \;\tilde K$ and $\lambda_0$, such that for every  $u\in\Con_0^\infty(V)$,   it holds
\begin{equation}\label{eq: Carleman estimate}
\tau^3\|( 1+\lambda^2a(x)^2 )^{1\over2} e^{\tau \varphi} u \|_{V}^2+\tau\|  ( 1+\lambda^2a(x)^2 )^{1\over2} e^{\tau \varphi} Du \|_{V}^2\lesssim
\| e^{\tau \varphi}Pu \|_{V}^2 ,
\end{equation}
where  $\lambda\ge \lambda_0$ and
$\tau\ge  \max\{\tilde K|\lambda|^{5\over4},\,1\}$.
\end{theorem}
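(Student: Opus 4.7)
\medskip

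\noindent\textbf{Proof proposal.} The statement is a direct unconjugation of Theorem~\ref{th-Carleman1}. The plan is to set $v=e^{\tau\varphi}u$ and use the identity $P_\varphi v=e^{\tau\varphi}Pu$ together with the elementary formula relating $e^{\tau\varphi}Du$ to $Dv$. Concretely, since $D_j=-i\partial_{x_j}$, a one-line computation gives
\begin{equation*}
e^{\tau\varphi}D_j u=D_j v+i\tau(\partial_{x_j}\varphi)\,v,\qquad j=1,\dots,d,
\end{equation*}
so that for each $j$ one has $\nu(x)e^{\tau\varphi}D_j u=\nu(x)D_j v+i\tau\nu(x)(\partial_{x_j}\varphi)\,v$. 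Note that $u\in\Con_0^\infty(V)$ implies $v\in\Con_0^\infty(V)$, so Theorem~\ref{th-Carleman1} is applicable to $v$.

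Next, I would take squared $L^2$ norms and use the trivial bound $\|A+B\|^2\le 2\|A\|^2+2\|B\|^2$ to obtain
\begin{equation*}
\tau\|\nu(x)\,e^{\tau\varphi}Du\|_V^2\;\le\;2\tau\|\nu(x)Dv\|_V^2+C\tau^3\|\nabla\varphi\|_{L^\infty}^2\,\|\nu(x)v\|_V^2,
\end{equation*}
which combined with $\tau^3\|\nu(x)e^{\tau\varphi}u\|_V^2=\tau^3\|\nu(x)v\|_V^2$ shows that the left-hand side of \eqref{eq: Carleman estimate} is controlled, up to an absolute constant depending only on $\|\nabla\varphi\|_{L^\infty(V)}$, by $\tau^3\|\nu(x)v\|_V^2+\tau\|\nu(x)Dv\|_V^2$.

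Finally, applying Theorem~\ref{th-Carleman1} to $v$ yields
\begin{equation*}
\tau^3\|\nu(x)v\|_V^2+\tau\|\nu(x)Dv\|_V^2\;\lesssim\;\|P_\varphi v\|_V^2\;=\;\|e^{\tau\varphi}Pu\|_V^2,
\end{equation*}
for $\lambda\ge\lambda_0$ and $\tau\ge\max\{\tilde K|\lambda|^{5/4},1\}$, and chaining the two inequalities gives exactly \eqref{eq: Carleman estimate}. In this reduction there is no genuine obstacle; the only point requiring a little care is the commutator generated by the conjugation applied to $D$, but it is harmless since the resulting extra term $\tau^3\|\nu v\|_V^2$ is of the same form already present on the left-hand side of the estimate from Theorem~\ref{th-Carleman1}, and in particular the constraints $\lambda\ge\lambda_0$, $\tau\ge\max\{\tilde K|\lambda|^{5/4},1\}$ propagate without change.
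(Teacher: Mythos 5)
Your argument is correct and is essentially the paper's own: both set $v=e^{\tau\varphi}u$, use $e^{\tau\varphi}D_ju=D_jv+i\tau(\partial_{x_j}\varphi)v$ to pass between $\nu\,e^{\tau\varphi}Du$ and $\nu Dv$ up to a $\tau\|\nu v\|$ term, and then invoke Theorem~\ref{th-Carleman1} on $v$ together with $P_\varphi v=e^{\tau\varphi}Pu$. The paper phrases the reduction as a two-sided equivalence (its inequality \eqref{eq:dc1c2}) while you only carry out the one direction actually needed, but the substance is identical.
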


\begin{proof}
 Set $v=e^{\tau\varphi}u$. From Theorem \ref{th-Carleman1}, it suffices to prove   that  \eqref{eq: Carleman estimate} is equivalent to
\begin{equation}\label{eq: conjugate Carleman estimate}
 \tau^3 \| \nu(x)  v \|_{V}^2+  \tau \| \nu(x) D  v \|_{V}^2\lesssim \|  P_\varphi v \|_{V}^2 .
\end{equation}
 First, assume \eqref{eq: conjugate Carleman estimate} holds.   Then,
  $Dv=e^{\tau\varphi}(Du-i\tau \nabla \varphi  \, u)$  and  $e^{\tau\varphi}Du= Dv +i\tau \nabla\varphi \,v$.
  Then there exist positive constants $c_1,\; c_2$ such that
  \begin{equation}
  \label{eq:dc1c2}
  \begin{array}{lcl}
  c_1\,\big(\|\, \nu(x)  Dv \|_{V}+\tau \| \nu(x) v\|_{V}\big) & \le& \| \nu(x) e^{\tau \varphi} Du \|_{V} +\tau \| \nu(x) e^{\tau \varphi}  u \|_{V}
  \\ \noalign{\medskip} &
  \le & c_2\,\big(\| \nu(x)  Dv \|_{V}+\tau \| \nu(x) v\|_{V}\big) .
  \end{array}
\end{equation}
  Combining this with \eqref{eq: conjugate Carleman estimate},
   we conclude that
 \begin{align*}
 \tau^3\| \nu(x) e^{\tau \varphi} u \|_{V} ^2+\tau\| \nu(x) e^{\tau \varphi} Du \|_{V}^2 \lesssim  ( \tau \| \nu(x) D  v \|_{V}^2+  \tau^3 \| \nu(x)  v \|_{V}^2)
 \lesssim
  \|e^{\tau\varphi}  P u\|_{V}^2.
 \end{align*}

 \noindent
 On the other hand, ~\eqref{eq: Carleman estimate} implies that
 \begin{align*}
 \tau^3\| \nu(x) e^{\tau \varphi} u \|_{V} ^2+\tau\| \nu(x) e^{\tau \varphi} Du \|_{V}^2
 \lesssim\|  P_\varphi v \|_{V}^2.
 \end{align*}
 Then, we proved \eqref{eq: conjugate Carleman estimate} from the above estimate and \eqref{eq:dc1c2}.
 \end{proof}

\vskip 4mm

Since there is higher order term $\Div(a(x)\nabla y_t)$ in system \eqref{system},  it is necessary to deal with the term
$\Div(a(x)\nabla f)$ for $f\in H^1(\Omega)$ when proving the resolvent estimate.
The following result is analogue to the work by   Imanuvilov and  Puel  (\cite{puel}).

\begin{theorem}
	\label{th: Carleman H-1}
Suppose $\varphi$ satisfies   sub-ellipticity condition on $V\subset \Omega$.
Then, there exist $C, \,\tilde K,\,\lambda_0>0$,   such that for all  $u\in\Con_0^\infty(V)$ satisfying
\begin{equation}
\label{eq:Pf}
P(x,D,\lambda)u=g_0+\sum_{j=1} ^d  \partial_{x_j} g_j, \; \;  \mbox{where   }  \; g_j \in L^2(V),\; j=0,1,\cdots, d,
\end{equation}
it holds
\begin{equation}\label{eq: Carleman estimate H-1}
\tau\|( 1+\lambda^2a(x)^2 )^{1\over2} e^{\tau \varphi} u \|_{V } ^2+\tau^{-1} \|  ( 1+\lambda^2a(x)^2 )^{1\over2} e^{\tau \varphi} Du \|_{V}^2\le
C\sum_{j=0}^d  \| e^{\tau \varphi}g_j \|_{V}^2,
\end{equation}
where  $\lambda\ge \lambda_0$ and
$\tau\ge  \max\{\tilde K|\lambda|^{5\over4},\,1\}$.
 \end{theorem}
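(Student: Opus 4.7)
My strategy is the classical Imanuvilov--Puel method \cite{puel} for lifting an $L^2$ Carleman estimate (Theorem~\ref{th-Carleman}) to sources in divergence form. The drop from $\tau^3,\tau$ to $\tau,\tau^{-1}$ on the left-hand side corresponds exactly to trading one spatial derivative on $g_j$ for a factor of $\tau$ coming from the weight $e^{\tau\varphi}$ via an integration by parts. Since $P_\varphi v$ is only in $H^{-1}$, a direct application of Theorem~\ref{th-Carleman} is out of reach and I would proceed by duality against the adjoint operator.

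First, setting $v = e^{\tau\varphi} u \in \Con_0^\infty(V)$ and using the commutation $e^{\tau\varphi}\partial_{x_j} g_j = \partial_{x_j}(e^{\tau\varphi}g_j) - \tau (\partial_{x_j}\varphi) e^{\tau\varphi} g_j$, the conjugated equation rewrites as
\[
P_\varphi v = f_0 + \sum_{j=1}^d \partial_{x_j} f_j, \qquad f_j := e^{\tau\varphi} g_j,
\]
with $f_0 = e^{\tau\varphi} g_0 - \tau \sum_j (\partial_{x_j}\varphi) e^{\tau\varphi} g_j$ satisfying $\|f_0\|_V \lesssim \tau \sum_{j=0}^d \|e^{\tau\varphi} g_j\|_V$. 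Next, Theorem~\ref{th-Carleman} applied to the adjoint operator $P^*$ --- which shares the same principal symbol and sub-elliptic weight $\varphi$ with $P$, the sign flip in the subprincipal damping $\pm i\lambda DaD$ being irrelevant since the weight $\nu$ depends only on $a^2$ and the G\aa rding argument for $i[Q_2,Q_1]$ goes through unchanged --- yields, by Hahn--Banach/closed range arguments, the solvability statement: for every $\phi \in L^2(V)$ there exists $\psi$ with $P_\varphi^* \psi = \phi$ and
\[
\tau^{3/2}\|\nu\psi\|_V + \tau^{1/2}\|\nu D\psi\|_V \lesssim \|\phi\|_V.
\]
Choosing $\phi$ to extract $\tau\|\nu v\|_V^2$ (and, in a companion pairing, $\tau^{-1}\|\nu Dv\|_V^2$) on the left of the identity
\[
(\phi,v)_V = (P_\varphi^* \psi, v)_V = (\psi, P_\varphi v)_V = (\psi,f_0)_V - \sum_{j=1}^d (\partial_{x_j}\psi, f_j)_V,
\]
then using Cauchy--Schwarz together with the adjoint bound and $\|f_0\| \lesssim \tau \sum \|f_j\|$, one arrives at \eqref{eq: Carleman estimate H-1}. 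Reversing the conjugation as in the end of the proof of Theorem~\ref{th-Carleman} finishes the argument.

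The main obstacle is engineering the test function $\phi$ so that the duality produces exactly the weights $\tau,\nu^2$ on the left and $1$ (not $\nu^{-1}$ or $\lambda^2$) on the right: a naive choice like $\phi = \tau\nu^2 v$ brings in a spurious factor of $\|\nu\|_\infty \sim \lambda$, so one must work within a $\nu$-weighted dual framework or alternatively refine the adjoint Carleman estimate to admit $\nu^{-1}$-weighted right-hand sides directly. Extracting the $\|Dv\|^2$ term requires a test function of second order in $v$ (morally $\phi \sim \tau^{-1} D^* D(\nu^2 v)$), which must be interpreted distributionally or handled by approximation. Throughout, the commutators $[D,\nu]$ and $[\partial_{x_j},\nu^2]$ generate only subprincipal contributions, absorbed using the symbolic calculus of Section~\ref{Sec: Symbol} and the admissibility of $\nu$ established in Lemma~\ref{lemma-metric-weight}.
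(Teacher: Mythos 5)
The paper does \emph{not} use duality: it first strengthens the $L^2$ Carleman estimate \eqref{Carleman estimate L2} to also control $\tau^{-1}\|\nu D^2 v\|^2$ (estimate \eqref{eq: second derivative estimate}, using that $D^2$ and $\lambda a D^2$ differ from $Q_2$ and $Q_1$ by operators of order one), then applies that enhanced estimate to the smoothed function $v=\chi\Op(\mu^{-1})\widetilde u$ with $\widetilde u = e^{\tau\varphi}u$. The elementary Fourier-side inequality $\tau+\tfrac{|\xi|^2}{\tau}\lesssim \tfrac{\tau^3}{\mu^2}+\tfrac{|\xi|^4}{\tau\mu^2}$ recovers the lowered powers $\tau,\tau^{-1}$ on the left, while commuting $\Op(\mu^{-1})$ through $P_\varphi$ and noting that $\Op(\mu^{-1})\partial_{x_j}$ is $L^2$-bounded absorbs the divergence structure on the right. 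Your duality route is therefore a genuinely different strategy.

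There is, however, a critical gap in your proposal that your final paragraph of caveats does not touch. The adjoint of the \emph{conjugated} operator is $(P_\varphi)^* = e^{-\tau\varphi}P^*e^{\tau\varphi}=(P^*)_{-\varphi}$: the Carleman weight flips sign under adjunction. You rightly note that the sign of the subprincipal term $\pm i\lambda DaD$ is irrelevant, but you do not address the sign of $\varphi$. Since $-\varphi$ does not satisfy the sub-ellipticity condition of Definition~\ref{def: subelliptic} (the convexification $\varphi=e^{\gamma\psi}$ is built precisely to make the H\"ormander bracket positive on the characteristic set, and negating $\varphi$ reverses that sign), Theorem~\ref{th-Carleman} applied to $P^*$ gives a Carleman estimate for $(P^*)_\varphi$, which is not $(P_\varphi)^*$, so the solvability of $P_\varphi^*\psi=\phi$ with the stated bound does not follow. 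Separately, even if it did, the Hahn--Banach/closed-range construction only produces an $L^2$ bound on the solution $\psi$ (with a compensating negative weight on $\phi$), not the gradient bound $\tau^{1/2}\|\nu D\psi\|\lesssim\|\phi\|$ that you need to control the pairing $(\partial_{x_j}\psi,f_j)_V$. The actual Imanuvilov--Puel scheme resolves both issues by a variational minimization with carefully matched weight signs rather than a bare Hahn--Banach extension; the paper sidesteps the whole question by the direct $\Op(\mu^{-1})$ regularization described above. The two gaps you do flag --- the $\nu$-weight in the dual pairing and the extraction of the $\|Dv\|^2$ term --- are real as well, and unresolved in your sketch.
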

%

\begin{proof}
First, from \eqref{eq: symbol  Q_j and remainders}, we have $D^2= Q_2+S_2$ and $ \lambda a(x) D^2= Q_1+S_1$
where $S_1$ and $S_2$ have  symbols in $S(\tau \nu\mu,g)$  if $\tau\gtrsim  \lambda $. It follows that for any $  v
 \in \Cinf_0(V),$
\begin{equation}
\label{eq: estimate D2}
 \begin{array}{l}
 \| D^2 v\|^2_V\lesssim \| Q_2 v  \|^2_V  +\tau^2 ( \tau^2\| \nu(x) v\|^2_V   +  \|\nu(x)  D v \|^2_V),
 \\ \noalign{\medskip}
 \| \lambda aD^2  v \|^2_V\lesssim  \| Q_1v  \|^2_V  +\tau^2 ( \tau^2\| \nu(x) v\|^2_V   +  \|\nu(x)  D v \|^2_V).
\end{array}
 \end{equation}
Using \eqref{Carleman estimate L2}, \eqref{eq: estimate D2} and the fact that $\| \nu(x) D^2 v\|^2_V \le 2( \| D^2 v\|^2_V+  \| \lambda a(x) D^2  v \|^2_V)$,
  we obtain
\begin{equation}
	\label{eq: second derivative estimate}
\begin{array}{ll} & \displaystyle
\tau^3\| \nu(x) v\|^2_V   +  \tau   \|\nu(x)  D v \|^2_V+
\tau^{-1} \|\nu(x) D^2 v\|^2_V
 \lesssim   \| P_\varphi v\|^2_V  .
\end{array}
\end{equation}
%
%

Let $\widetilde{u} $ and $\chi$ be in $ \Con_0^\infty(\Omega) $ such that $\chi=1$ on a \nhd  of $\supp \widetilde{u}$.
Similarly to~\eqref{eq:dc1c2},
we obtain
\begin{equation}
\label{eq: estimation by mu-1}
\tau \| \nu(x) \widetilde{u}\|^2_V + \tau ^{-1}\| \nu(x) D \widetilde{u}\|^2_V
 \lesssim \tau \| \nu(x) \widetilde{u}\|^2_V + \tau ^{-1}\|  D ( \nu(x) \widetilde{u})\|^2_V  \notag.
\end{equation}
Then, combining this with Fourier transform  and the  following inequality
\begin{align*}
  \tau+\frac{\,|\,\xi\,|\,^2}{\tau}   \lesssim \frac{\tau^3}{\tau^2+\,|\,\xi\,|\,^2}+\frac{\,|\,\xi\,|\,^4}{\tau( \tau^2+\,|\,\xi\,|\,^2 )} ,
\end{align*}
we conclude   that
\begin{equation}
\label{eq: estimation by mu-1}
\tau \| \nu(x) \widetilde{u}\|^2_V + \tau ^{-1}\| \nu(x) D \widetilde{u}\|^2_V
 \lesssim \tau^3 \|  \Op(\mu^{-1})\nu(x) \chi \widetilde{u}\|^2_V + \tau ^{-1}\| D^2 \Op(\mu^{-1})\nu(x) \chi  w\|^2_V.
\end{equation}

 \noindent
From Lemma \ref{lemma-remainder2},  we have
$   \Op(\mu^{-1})\nu \chi =  \nu \chi \Op(\mu^{-1}) +R_1$, where $R_1$ has a symbol in $S(\mu^{-2}\nu \lambda^{1\over2},g)$, and
$D^2 \Op(\mu^{-1})\nu \chi   =   \nu D^2   \chi  \Op(\mu^{-1})   +R_2$, where $R_2$ has a symbol in $S( \nu \lambda^{1\over2},g)$.
Then,  it follows from \eqref{eq: estimation by mu-1} that
\begin{align*}
\tau \| \nu(x) \widetilde{u}\|^2_V + \tau ^{-1}\| \nu(x) D \widetilde{u}\|^2_V
& \lesssim    \tau^3 \|\nu(x) \chi  \Op(\mu^{-1}) \widetilde{u}\|^2_V + \tau ^{-1}\| \nu(x) D^2 \chi  \Op(\mu^{-1}) \widetilde{u}\|^2_V + \lambda \| \nu(x) \widetilde{u}\|^2_V.
\end{align*}
For $\tau \ge \max\{{\cal C}\lambda,\,1\}$ with ${\cal C}$ large enough, one has the following result from the above inequality.
\begin{align}\label{eq: abs}
\tau \| \nu(x) \widetilde{u}\|^2_V + \tau ^{-1}\| \nu(x) D \widetilde{u}\|^2_V
& \lesssim    \tau^3 \|\nu(x) \chi  \Op(\mu^{-1}) \widetilde{u}\|^2_V + \tau ^{-1}\| \nu D^2 \chi  \Op(\mu^{-1}) \widetilde{u}\|^2_V .
\end{align}
Now, we
apply~\eqref{eq: second derivative estimate} to $v=\chi\Op(\mu^{-1})\widetilde{u}$ to have
 	\begin{equation*} \begin{array}{ll}
 &\tau^3\| \nu(x) \chi\Op(\mu^{-1})\widetilde{u}\|^2_V   +  \tau   \|\nu(x)  D  \chi\Op(\mu^{-1})\widetilde{u} \|^2_V+
\tau^{-1} \|\nu(x) D^2  \chi\Op(\mu^{-1})\widetilde{u} \|^2_V
\\ \noalign{\medskip}
 \lesssim & \| P_\varphi \chi\Op(\mu^{-1})\widetilde{u} \|^2_V  .
\end{array} \end{equation*}
Thus, combining this with \eqref{eq: abs} yields
\begin{align}\label{eq: abs1}
\tau \| \nu(x) \widetilde{u}\|^2_V + \tau ^{-1}\| \nu(x) D \widetilde{u}\|^2_V
& \lesssim    \| P_\varphi     \chi  \Op(\mu^{-1}) \widetilde{u} \|^2_V .
\end{align}

Finally, note that $P_\varphi$ has a symbol in $S(\nu\mu^2,g)$. Consequently, $P_\varphi     \chi  \Op(\mu^{-1})=  \Op(\mu^{-1})P_\varphi     \chi  +R$, where $R$ has a symbol in
$S(\nu\lambda^{1\over2},g)$. Then, we can deduce from \eqref{eq: abs1} that
$$
\tau \| \nu(x) \widetilde{u}\|^2_V + \tau ^{-1}\| \nu(x) D \widetilde{u}\|^2_V
\lesssim    \|\Op(\mu^{-1})  P_\varphi   \widetilde{u} \|^2_V+\lambda\| \nu \widetilde{u}\|^2_V .
$$
When $\tau\ge {\cal  C} \lambda$  with ${\cal C}$ large enough,   the error
term $\lambda \| \nu \widetilde{u}\|^2_V$ can be absorbed  by the left hand side. Consequently,
\begin{align}
	\label{eq: Carleman H-1 first form}
\tau \| \nu(x) \widetilde{u}\|^2_V + \tau ^{-1}\| \nu(x) D \widetilde{u}\|^2_V
& \lesssim    \|\Op(\mu^{-1})  P_\varphi   \widetilde{u} \|^2_V .
\end{align}
For $\widetilde{u}=e^{\tau\varphi}u$, it follows from \eqref{eq: Carleman H-1 first form} and similar argument as \eqref{eq:dc1c2} that
\begin{equation}
\label{eq: e0}
\begin{array}{lcl}
\tau \| \nu(x) e^{\tau\varphi}u\|^2_V + \tau ^{-1}\| \nu(x) e^{\tau\varphi}D  u\|^2_V &\lesssim  &   \tau \| \nu(x) e^{\tau\varphi}u\|^2_V + \tau ^{-1}\| \nu(x) D e^{\tau\varphi}u\|^2_V
\\ \noalign{\medskip} & \displaystyle
 \lesssim  &  \|\Op(\mu^{-1})  P_\varphi   e^{\tau\varphi}u \|^2_V .
\end{array}
\end{equation}
Obviously, one has that
\begin{align*}
P_\varphi \widetilde{u}= e^{\tau\varphi}P u=  e^{\tau\varphi} g_0+\sum_{j=1} ^d   e^{\tau\varphi  }\partial_{x_j} g_j=
e^{\tau\varphi} g_0+\sum_{j=1} ^d \big(   \partial_{x_j}( e^{\tau\varphi  } g_j ) -\tau e^{\tau\varphi  } g_j\partial_{x_j}\varphi   \big),
\end{align*}
which yields
\begin{align}
 \label{eq: e1}\|\Op(\mu^{-1})  P_\varphi   \widetilde{u} \|^2_V   \lesssim\sum\limits_{j=0}^d  \| e^{\tau \varphi}g_j \|^2_V.\end{align}
Hence,  we obtain Theorem~\ref{th: Carleman H-1} from \eqref{eq: e0} and \eqref{eq: e1}.
\end{proof}

\vskip 4mm

\begin{remark} \label{remark-1}
 Since $a(\cdot) $ is   nonnegative  and not identically null, there exists $\delta>0$ such that  $\{x\in\Omega\::\: a(x)> \delta\}\ne \emptyset$.
 We introduce several sets as follows.
 $$
 \begin{array}{l}
 W_0 = \Omega\setminus \supp\,a,
 \\ \noalign{\medskip}
 W_1= \overline{\Omega}\setminus {\cal O}\big(\supp\,a\big),
 \\ \noalign{\medskip}
W_2 = \Omega\setminus \big(\{x\in\Omega\::\: a(x)\ge\delta\}\cup {\cal O}(\Gamma)\big),
 \\ \noalign{\medskip}
    W_3=  \Omega \setminus \{x\in\Omega\::\: a(x)\ge  {\delta\over2}  \},
 \end{array}
$$
where $ {\cal O}(\Gamma)$ means the  neighborhood of $\Gamma.$

It is known that   there exists  a function
 $\psi\in\Con^\infty(\Omega) $   such that (\cite{Fursikov})
\begin{description}
\item[1)] $ \psi(x)=0$ for $x\in\partial\Omega$.
\item[2)] $\partial_n \psi(x)<0 $  for $x\in\partial\Omega$.
\item[3)] $\nabla \psi(x)\ne 0$ for $x\in \overline{\Omega\setminus\{x\in\Omega\::\: a(x)\ge  \delta \}}$. 
\end{description}
Let $\varphi=e^{\gamma \psi}$.  It follows  from Lemma \ref{lemma-sub-ellip} that  $\varphi $ satisfies the sub-ellipticity condition on
$x\in \Omega\setminus\{x\in\Omega\::\: a(x)\ge  \delta \}$ if $\gamma>0 $ is   sufficiently large.
Then, in Theorem \ref{th-Carleman} and \ref{th: Carleman H-1}, one can choose $V$ as   $\Omega\setminus\{x\in\Omega\::\: a(x)\ge  \delta \}$.
\end{remark}

The following result is a   classical Carleman estimate and  corresponding to   the Laplacian with Dirichlet boundary condition (\cite{lebau-robbiano},~Proposition 2). We shall use it to deal with the terms on $\Omega\setminus \supp\,a.$

\begin{lemma}
	\label{lemma: Carleman estimate up to boundary}
Suppose $\varphi$ is chosen as in Remark \ref{remark-1}.
Then, there exist ${\cal C} >0$, $\lambda_0>0$, such that for all
$u\in\Con^\infty(\overline{\Omega})$  satisfying  $\supp u \subset W_1$ and $u=0$ on $\Gamma$,  it holds
\begin{equation}\label{eq: classical Carleman estimate up tp the boundary}
\tau^3\|e^{\tau \varphi} u \|_{\Omega}^2+\tau\| e^{\tau \varphi} Du \|_{\Omega}^2\lesssim
 \| e^{\tau \varphi}(D^2-\lambda^2)u \|_{\Omega}^2 ,
\end{equation}
where  $\lambda\ge \lambda_0$ and
$\tau\ge  \max\{{\cal C}\lambda ,\,1\}$.
\end{lemma}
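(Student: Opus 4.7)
The plan is to conjugate by the weight and reduce to the classical boundary Carleman estimate for the shifted Laplacian. Since $\supp u \subset W_1 = \overline{\Omega}\setminus\mathcal O(\supp a)$, the damping coefficient $a$ vanishes on a neighborhood of $\supp u$, so the operator $P(x,D,\lambda)$ acts there as $D^2-\lambda^2$ and the weight $\nu$ from \eqref{eq:weight} reduces to $1$. Set $v = e^{\tau\varphi} u$; then $v \in \Con^\infty(\overline{\Omega})$, $v|_\Gamma = 0$, and the conjugated operator is
\begin{equation*}
P_\varphi^0 v := e^{\tau\varphi}(D^2-\lambda^2)e^{-\tau\varphi} v = (D+i\tau\nabla\varphi)^2 v - \lambda^2 v.
\end{equation*}

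Decomposing $P_\varphi^0 = Q_2 + iQ_1$ with $Q_2 = D^2-\tau^2|\nabla\varphi|^2-\lambda^2$ and $Q_1 = \tau(\nabla\varphi\cdot D + D\cdot\nabla\varphi)$ both formally self-adjoint, expanding the $L^2$ norm gives
\begin{equation*}
\|P_\varphi^0 v\|_\Omega^2 = \|Q_2 v\|_\Omega^2 + \|Q_1 v\|_\Omega^2 + (i[Q_2,Q_1]v,v)_\Omega + B(v),
\end{equation*}
where $B(v)$ collects the boundary contributions from the two integrations by parts needed to symmetrize $2\Re(Q_2 v, iQ_1 v)_\Omega$ into the commutator. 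The interior commutator is handled exactly as in Lemma \ref{lemma-classical}: by property~3) of $\psi$ in Remark~\ref{remark-1}, the weight $\varphi$ is sub-elliptic on $W_1$, so an estimate of the form \eqref{Carleman1} combined with G\aa rding's inequality yields
\begin{equation*}
\tau^3\|v\|_\Omega^2 + \tau\|Dv\|_\Omega^2 \lesssim \|Q_2 v\|_\Omega^2 + \|Q_1 v\|_\Omega^2 + (i[Q_2,Q_1]v,v)_\Omega + \lambda^2\bigl(\tau\|v\|_\Omega^2 + \tau^{-1}\|Dv\|_\Omega^2\bigr).
\end{equation*}
The constant $\lambda^2$ shift in $Q_2$ commutes with $Q_1$ (so the commutator is unchanged) and contributes only lower-order error terms of the form $\lambda^2\|v\|^2$ and $\lambda^2\|Dv\|^2$, which are absorbed into the left-hand side under the assumption $\tau \ge \mathcal C\lambda$ with $\mathcal C$ sufficiently large.

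The main obstacle, and the only genuinely new ingredient compared to Theorem~\ref{th-Carleman}, is the control of the boundary term $B(v)$. Since $v|_\Gamma = 0$, all tangential derivatives of $v$ vanish on $\Gamma$, so $B(v)$ reduces to a quadratic form in $\partial_n v|_\Gamma$ with leading coefficient proportional to $-\tau\,\partial_n\varphi|_\Gamma = -\tau\gamma\,\varphi|_\Gamma\,\partial_n\psi|_\Gamma$. By property~2) of $\psi$ in Remark~\ref{remark-1}, $\partial_n\psi|_\Gamma < 0$, so this coefficient is strictly positive and $B(v) \ge c\tau\|\partial_n v\|_\Gamma^2 \ge 0$. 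Hence $B(v)$ can simply be dropped from the right-hand side without losing information. This is the standard Lebeau--Robbiano boundary sign trick (\cite{lebau-robbiano}, Proposition~2), to which we refer for the precise computation. Undoing the conjugation $v = e^{\tau\varphi}u$ exactly as in the proof of Theorem~\ref{th-Carleman} then yields \eqref{eq: classical Carleman estimate up tp the boundary}.
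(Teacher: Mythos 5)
The paper offers no proof of Lemma~\ref{lemma: Carleman estimate up to boundary}: it is stated as a known result and attributed to \cite{lebau-robbiano}, Proposition~2. Your high-level sketch — conjugate, split $P_\varphi^0=Q_2+iQ_1$, expand $\|P_\varphi^0 v\|^2$ into the two squares plus a commutator plus a boundary remainder $B(v)$, and use $\partial_n\varphi|_\Gamma<0$ (from property~2) of $\psi$) to see that $B(v)$ has a favourable sign — does reflect the structure of that reference, and the absorption of the constant shift $-\lambda^2$ for $\tau\gtrsim\lambda$ is handled correctly.

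The pivotal interior step, however, hides a genuine gap. You write that ``the interior commutator is handled exactly as in Lemma~\ref{lemma-classical}'' and that the boundary term $B(v)$ is ``the only genuinely new ingredient.'' Lemma~\ref{lemma-classical} is stated and proved for $w\in\Con_0^\infty(V)$; its proof runs the G{\aa}rding inequality of Lemma~\ref{lemma-Garding} and the symbolic calculus globally on $\R^d$, and the resulting bound \eqref{Classical-commutator-estimate} involves the second-order quantity $\|\Op(|\xi|^2-\tau^2|\nabla\varphi(x)|^2)w\|^2_{\R^d}$. In the boundary situation $v=e^{\tau\varphi}u$ vanishes on $\Gamma$ but $\partial_n v|_\Gamma\neq0$ in general, so the zero-extension $\widetilde v$ belongs to $H^1(\R^d)$ but not to $H^2(\R^d)$: $D^2\widetilde v$ carries a surface layer on $\Gamma$, and $\|\Op(|\xi|^2-\tau^2|\nabla\varphi(x)|^2)\widetilde v\|_{\R^d}$ is not finite. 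Thus neither the statement nor the proof of Lemma~\ref{lemma-classical} can be applied to $v$, and the replacement of the global quantities by $\|Q_2v\|_\Omega^2$, $\|Q_1v\|_\Omega^2$ is not ``exact'' — it differs by boundary contributions that you have not accounted for. The only boundary terms you track are those arising from the symmetrization of $2\Re(Q_2v,iQ_1v)_\Omega$, but establishing the commutator positivity itself near $\Gamma$ also requires working up to the boundary, which in \cite{lebau-robbiano} (and in the general theory) is done by tangential pseudodifferential calculus in $(x,\xi')$ with the normal variable as an evolution parameter, or by an explicit integration-by-parts scheme that records every boundary contribution. Until this is supplied, the chain of inequalities does not close. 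If you want a short proof, the appropriate move is exactly what the paper does — cite \cite{lebau-robbiano}, Proposition~2, wholesale; if you want a self-contained argument, the appeal to Lemma~\ref{lemma-classical} has to be replaced by a genuinely boundary-compatible version.
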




\begin{theorem}
\label{th: global Carleman up to boundary}
Suppose $\varphi$ is chosen as in Remark \ref{remark-1}.
Let $u\in\Con^\infty(\overline{\Omega})$  and
 satisfy
$$\begin{array}{lcl}\displaystyle
P(x,D,\lambda)u=f_0+\sum_{j=1}^d\partial_{x_j}f_j& \mbox{in} &\Omega,
\\ \noalign{\medskip}
u=0& \mbox{on} &\Gamma,   \end{array}
$$
where $f_j \in L^2(\Omega)$, $\supp f_0 \subset \Omega$
 and $\supp f_j \subset {\cal O}\big(\supp\,a\big)$ for $j=1,
\cdots , d. $
Then, there exist $  \tilde K>0$, $\lambda_0>0$, such that for all $\lambda\ge \lambda_0$ and
$\tau\ge  \max\{\tilde K|\lambda|^{5\over4},\,1\}$,
  it holds
\begin{align*}
 \tau\|e^{\tau \varphi} u \|_{W_3}^2+\tau^{-1}\| e^{\tau \varphi}  Du \|_{W_3}^2  \lesssim
   &  \sum_{j=0}^d  \| e^{\tau \varphi}f_j \|_\Omega^2
+ \lambda   \| e^{\tau \varphi}  u\|^2_{\{ x\in\Omega\,:\,  a(x)\ge \delta/2 \}} ,
\end{align*}
where the positive constant $\delta$ is defined as in Remark \ref{remark-1}.
\end{theorem}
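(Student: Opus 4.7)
My plan is to combine two Carleman estimates via a partition of unity: the estimate up to the boundary (Lemma \ref{lemma: Carleman estimate up to boundary}) near $\Gamma$, where $a \equiv 0$ and $P$ reduces to $D^2 - \lambda^2$, and the interior $H^{-1}$-Carleman estimate (Theorem \ref{th: Carleman H-1}) in the sub-elliptic region $V$. Since $\supp a$ is separated from $\Gamma$, I choose $\chi_\Gamma \in \Cinf(\overline{\Omega})$ supported in a thin collar $\mathcal{O}_1(\Gamma) \subset W_1$ and equal to $1$ on a smaller collar $\mathcal{O}_0(\Gamma)$, and I set $\chi_I := (1-\chi_\Gamma)\eta_a$, with $\eta_a \equiv 1$ on a neighborhood of $\{a \leq \delta/2\}$ and $\supp\eta_a \subset \{a < \delta\}$. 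Then $\chi_\Gamma + \chi_I \equiv 1$ on $W_3$, $\chi_I \in \Con_0^\infty(V)$, and $\supp\nabla\chi_I = T_I^{(1)} \cup T_I^{(2)}$ with $T_I^{(1)} \subset \mathcal{O}_1(\Gamma)$ (so $a \equiv 0$ there) and $T_I^{(2)}$ compactly contained in $\{\delta/2 < a < \delta\}$.

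On $\chi_\Gamma u$, support disjointness kills the divergence-form data, giving $P(\chi_\Gamma u) = (D^2 - \lambda^2)(\chi_\Gamma u) = \chi_\Gamma f_0 + [D^2,\chi_\Gamma]u$; dividing Lemma \ref{lemma: Carleman estimate up to boundary} by $\tau^2$ yields
\[
\tau \|e^{\tau\varphi}\chi_\Gamma u\|_\Omega^2 + \tau^{-1}\|e^{\tau\varphi} D(\chi_\Gamma u)\|_\Omega^2 \lesssim \tau^{-2}\bigl(\|e^{\tau\varphi}f_0\|^2 + \|e^{\tau\varphi} u\|_{T_\Gamma}^2 + \|e^{\tau\varphi} Du\|_{T_\Gamma}^2\bigr),
\]
with $T_\Gamma = \supp\nabla\chi_\Gamma$. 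For $\chi_I u \in \Con_0^\infty(V)$, writing the commutators in divergence form is crucial: $[D^2,\chi_I]u = (\Delta\chi_I)u - 2\sum_j \partial_{x_j}((\partial_{x_j}\chi_I)u)$ has no $\nabla u$ component, while $i\lambda[DaD,\chi_I]u = -i\lambda(a\nabla\chi_I \cdot \nabla u + \mathrm{div}(au\nabla\chi_I))$ has a $\nabla u$-contribution that vanishes on $T_I^{(1)}$ since $a \equiv 0$ there. Theorem \ref{th: Carleman H-1} then bounds $\tau\|e^{\tau\varphi}\chi_I u\|_V^2 + \tau^{-1}\|e^{\tau\varphi}D(\chi_I u)\|_V^2$ by $\sum_{j=0}^d\|e^{\tau\varphi}f_j\|^2$ plus errors supported in $T_I^{(1)}$ (only $\|u\|^2$-type, lying in $W_1 \cap W_3$) and in $T_I^{(2)}$ (of the form $\lambda^2\|e^{\tau\varphi} u\|_{T_I^{(2)}}^2 + \lambda^2\|e^{\tau\varphi} Du\|_{T_I^{(2)}}^2$). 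Summing the two estimates, using $u = \chi_\Gamma u + \chi_I u$ and $Du = D(\chi_\Gamma u) + D(\chi_I u)$ on $W_3$, and absorbing the $T_\Gamma \cup T_I^{(1)}$-errors into the left-hand side for $\tau$ large enough reduces everything to controlling the residual terms $\lambda^2\|e^{\tau\varphi} u\|_{T_I^{(2)}}^2$ and $\lambda^2\|e^{\tau\varphi} Du\|_{T_I^{(2)}}^2$.

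These residual errors live in $T_I^{(2)} \subset \{\delta/2 < a < \delta\}$, outside $W_3$, and are not directly controlled by the combined Carleman estimate; I dispose of them by a Caccioppoli-type energy identity. Pick $\zeta \in \Con_0^\infty(\Omega)$ with $\zeta \equiv 1$ on $T_I^{(2)}$ and $\supp\zeta \subset \{a \geq \delta/2\}$, test the equation $Pu = f_0 + \sum_j\partial_{x_j}f_j$ against $\zeta^2 e^{2\tau\varphi}\bar u$, integrate by parts, and take the imaginary part: the damping contribution $\lambda\int\zeta^2 e^{2\tau\varphi} a|\nabla u|^2$ emerges on the left, and the cross terms (from $\nabla\zeta$ and from $\tau\nabla\varphi$) together with the data are handled by Young's inequality. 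Using the lower bound $a \geq \delta/2$ on $\supp\zeta$ to convert $a|\nabla u|^2$ into $|\nabla u|^2$, this produces a bound of the form $\lambda^2\|e^{\tau\varphi}Du\|_{T_I^{(2)}}^2 + \lambda^2\|e^{\tau\varphi}u\|_{T_I^{(2)}}^2 \lesssim \sum_{j=0}^d\|e^{\tau\varphi}f_j\|^2 + \lambda\|e^{\tau\varphi}u\|_{\{a \geq \delta/2\}}^2$, which closes the proof. The principal difficulty is this Caccioppoli step: the commutators of the damping operator with the cutoff $\chi_I$ generate derivative-errors precisely in the damping region, where the primary Carleman estimates offer no control, and bringing them under the $\lambda\|u\|^2$-threshold demanded by the theorem requires the full strength of the positivity identity $\mathrm{Im}(Pu,\bar u)_{L^2} = \lambda\int a|\nabla u|^2$ together with the range $\tau \geq \tilde{K}\lambda^{5/4}$ to absorb the cross terms at the right scale.
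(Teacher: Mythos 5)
Your decomposition (boundary cutoff plus interior cutoff in the sub-elliptic region) and your use of Lemma \ref{lemma: Carleman estimate up to boundary} near $\Gamma$ and Theorem \ref{th: Carleman H-1} away from it track the paper's strategy; the divergence-form treatment of $[D^2,\chi_I]u$ is also correct. The proof breaks down at the last step. You leave the term $\lambda\,a(\nabla\chi_I)\cdot\nabla u$ as an $L^2$ source in Theorem \ref{th: Carleman H-1}, which produces the error $\lambda^2\|e^{\tau\varphi}Du\|^2_{T_I^{(2)}}$, and you then propose to dispose of this (together with $\lambda^2\|e^{\tau\varphi}u\|^2_{T_I^{(2)}}$) by a weighted Caccioppoli identity. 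The claimed output
$\lambda^2\|e^{\tau\varphi}Du\|^2_{T_I^{(2)}}+\lambda^2\|e^{\tau\varphi}u\|^2_{T_I^{(2)}}\lesssim\sum_j\|e^{\tau\varphi}f_j\|^2+\lambda\|e^{\tau\varphi}u\|^2_{\{a\geq\delta/2\}}$
cannot hold: the zeroth-order term alone would force $\lambda^2\lesssim\lambda$, false for large $\lambda$. Taking the imaginary part of $(Pu,\zeta^2 e^{2\tau\varphi}\bar u)$ does \emph{cancel} the $\lambda^2\!\int\zeta^2 e^{2\tau\varphi}|u|^2$ term (it is real), but does not provide an upper bound on it. Moreover the cross terms generated by $\nabla(\zeta^2 e^{2\tau\varphi})$ carry a $\tau\nabla\varphi$ factor; after Young they yield $\gtrsim\lambda^{-1}\tau^2\|e^{\tau\varphi}u\|^2_{\{a\geq\delta/2\}}$, which under $\tau\geq\tilde K|\lambda|^{5/4}$ exceeds $\lambda^{3/2}\|e^{\tau\varphi}u\|^2$ and thus overshoots the allowed error in the theorem. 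So the Caccioppoli step neither closes the estimate nor is of the right size.

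The repair — and what the paper actually does — is to never generate the $\|Du\|^2$ error: write the offending commutator term itself in divergence form,
$\lambda\,a(\nabla\chi_I)\cdot\nabla u=\lambda\,\Div\big(a(\nabla\chi_I)\,u\big)-\lambda\,\Div\big(a\nabla\chi_I\big)\,u$,
so the source fed into Theorem \ref{th: Carleman H-1} takes the form $g_0+\sum_j\partial_{x_j}g_j$ with every $g_j$ involving $u$ only (not $\nabla u$). The $T_I^{(2)}$-error then becomes a pure $\|e^{\tau\varphi}u\|^2$-term on $\{a\geq\delta/2\}$, i.e. already of the shape allowed on the right-hand side of the statement, and no auxiliary energy identity is needed. (The paper performs exactly this reduction for both cutoffs $\chi_1,\chi_2$, absorbs the errors supported on the region where $\chi_1+\chi_2$ is bounded below by taking $\tau$ large, and the residual on $\{a\geq\delta/2\}$ is the final error term.)
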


\begin{proof}
Let $\chi_1,\;\chi_2\in\Con^\infty(\Omega)$ be non-negative and satisfy the following assumption
\begin{enumerate}
\item[{\rm (i)}] $0\le \chi_1,\;\chi_2\le1$; $\chi_1$  and $\chi_2$ are supported on $ W_1$ and $W_2$, respectively.
\item[{\rm (ii)}] $   \chi_1+\chi_2 \ge 1$ in $W_3$. In particular,
$\chi_1\equiv 1$ on   $ [{\cal O}(\d\Omega)\cap \Omega] \setminus  {\cal O}(\supp\,a)$, and $\chi_2\equiv1$ on ${\cal O}(\supp\,a) \setminus \{x\in \Omega\,:\, a(x) \ge {\delta \over2}\}.$
\end{enumerate}

 First, it is clear that $$P\chi_2u= \chi_2 f_0+ \sum\limits_{j=1}^d \big( \partial_{x_j}(\chi_2f_j)-f_j \partial_{x_j}\chi_2 \big) +[P,\chi_2]u.$$
Since $ [P,\chi_2]$ is a first order  operator,  we have there exist $a_0,\,a_1,\,\cdots,\,a_d$ and $b_0,\,b_1,\,\cdots,\,b_d$ such that  $$ [P,\chi_2] u=   \sum\limits_{j=0}^d a_j \partial_{x_j}u + \lambda\sum\limits_{j=0}^d  b_j\partial_{x_j} u  ,$$

\noindent
where $\supp\,a_j \subset \{x\in \Omega\,:\, {\delta\over 2} < a(x)<\delta\}\cup [\Omega\setminus({\cal O}(\supp\,a)\cup {\cal O}(\supp\,\partial\Omega))],\;\supp\, b_j \subset \{x\in \Omega\,:\, {\delta\over 2} < a(x)<\delta\} $   for $j=0,1,\cdots, d$.
Then,  applying Theorem~\ref{th: Carleman H-1} with $\chi_2u $ instead of $u$, $\Omega\setminus\{x\in\Omega\::\: a(x)\ge  \delta \}$ instead of V, we obtain
\begin{equation*}\label{eq: proof Carleman global 2}\begin{array}{ll}
&\displaystyle \tau\|( 1+\lambda^2a(x)^2 )^{1\over2} e^{\tau \varphi}\chi_2 u \|_{\Omega} ^2+\tau^{-1} \|  ( 1+\lambda^2a(x)^2 )^{1\over2} e^{\tau \varphi} D
\chi_2 u \|_{\Omega}^2
\\ \noalign{\medskip}
\lesssim &\displaystyle  \sum_{j=0}^d  \| e^{\tau \varphi}f_j \|_{\Omega}^2 +  \| e^{\tau \varphi} u\|^2_{\{x\in \Omega\,:\, {\delta\over 2} < a(x)<\delta\}\cup (\Omega\setminus\supp\,a)}
+  \lambda \| e^{\tau \varphi}u\|^2_{\{x\in \Omega\,:\, {\delta\over 2} < a(x)<\delta\} }.
\end{array}\end{equation*}
Consequently, due to $( 1+\lambda^2a(x)^2 )\ge 1
$, $ \tau\ge \{{\cal C}\lambda,\, 1\}$ and $\lambda\ge \lambda_0$,
we obtain
\begin{equation}\label{eq: proof Carleman global 2}\begin{array}{ll}
&\displaystyle \tau\|( 1+\lambda^2a(x)^2 )^{1\over2} e^{\tau \varphi}\chi_2 u \|_{\Omega} ^2+\tau^{-1} \|  ( 1+\lambda^2a(x)^2 )^{1\over2} e^{\tau \varphi} D
\chi_2 u \|_{\Omega}^2
\\ \noalign{\medskip}
\lesssim &\displaystyle  \sum_{j=0}^d  \| e^{\tau \varphi}f_j \|_{\Omega}^2 +  \| e^{\tau \varphi} u\|^2_{ \Omega\setminus\supp\,a }
+  \lambda \| e^{\tau \varphi}u\|^2_{\{x\in \Omega\,:\, a(x)\ge {\delta\over 2}\} }.
\end{array}\end{equation}
%

On the other hand, by using  $\chi_1u $ instead of $u$ in  Lemma~\ref{lemma: Carleman estimate up to boundary}, we   obtain
\begin{align}
 	\label{eq: proof Carleman global 0}\tau^3\|e^{\tau \varphi} \chi_1u \| _{\Omega}^2+\tau \| e^{\tau \varphi} D\chi_1u \|_{\Omega}^2\lesssim
 \| e^{\tau \varphi} (P - i\lambda D a(x) D )\chi_1u \|_{\Omega}^2  .
\end{align}
 Since $\chi_1\partial_{x_j} f_j=0$ for $j=1,\cdots,d,$
 we have $$P\chi_1u=\chi_1 f_0+ [P,\,\chi_1]u.$$
Therefore, combining these with \eqref{eq: proof Carleman global 0} and  ${\cal O}(\supp \,a )\cap \supp\, \chi_1 =\emptyset$ yields
 \begin{equation}
 	\label{eq: proof Carleman global 1}
 \tau\|e^{\tau \varphi} \chi_1u \| _{\Omega}^2+\tau^{-1}\| e^{\tau \varphi} D\chi_1u \|_{\Omega}^2\;\lesssim\;
\tau^{-2} \big(\| e^{\tau \varphi}   f_0 \|_{\Omega}^2 +  \| e^{\tau \varphi}  [P,\,\chi_1]u\|_{\Omega}^2 \big) .
  \end{equation}

\noindent
Adding up ~\eqref{eq: proof Carleman global 1} and ~\eqref{eq: proof Carleman global 2}, using $( 1+\lambda^2a(x)^2 )\ge 1
$ and $\tau\ge  1$,
we obtain
\begin{equation*}\begin{array}{ll} &\displaystyle
\tau\|e^{\tau \varphi}( \chi_1+\chi_2) u \|_\Omega^2+\tau^{-1}\| e^{\tau \varphi} D(\chi_1+\chi_2)u \|_\Omega^2
 \\ \noalign{\medskip}\lesssim & \displaystyle
 \sum_{j=0}^d  \| e^{\tau \varphi}f_j \|_\Omega^2
  +  \| e^{\tau \varphi} u\|^2_{ \Omega\setminus\supp\,a }
+  \lambda \| e^{\tau \varphi}u\|^2_{\{x\in \Omega\,:\,   a(x)\ge {\delta\over2}\} }
 +\tau^{-2} \| e^{\tau \varphi}  [P,\,\chi_1]u\|_{\Omega}^2   .
\end{array}
\end{equation*}

\noindent
Note that  $ [P,\chi_1] $ is a first order operator and supported on $\supp\,\chi_1 \setminus\{x\in \Omega\,:\, \chi_1  \equiv 1\}$, which is an subset of $\{x\in \Omega\,:\, \chi_1+\chi_2=1\}$. Then,  for $\tau$ sufficiently large,  we have
\begin{align*}
 \tau\|e^{\tau \varphi}( \chi_1+\chi_2) u \|_\Omega ^2+\tau^{-1}\| e^{\tau \varphi} (\chi_1+\chi_2) Du \|_\Omega^2 & \lesssim
 \sum_{j=0}^d  \| e^{\tau \varphi}f_j \|_\Omega^2
+ \lambda \| e^{\tau \varphi}  u\|^2_{ \{ x\in\Omega\,:\,a(x)\ge {\delta\over2} \} }.
\end{align*}
Then as $1\le \chi_1+\chi_2\le 2 $  on $W_3$, we obtain the statement of
Theorem~\ref{th: global Carleman up to boundary}.
 \end{proof}

\section{Resolvent estimate}
\label{section: resolvent estimate}
\setcounter{equation}{0} \setcounter{theorem}{0}

In this section, we shall  prove   the main result.
From the results in \cite{Batty-Duyckaerts, burq, Duyckaerts},   the logarithmic decay of the energy
in Theorem \ref{th2} can be obtained through the following theorem.
\begin{theorem}\label{th1}
Suppose  conditions in Theorem \ref{th2} hold. Then, for every $\lambda \in \mathbb{R}$ with $|\lambda|$  large enough,
 there exists $C>0 $ such that
\begin{equation}
\label{th-resolvent}
\Big\|({\cal A} -i\lambda)^{-1}\Big\|_{{\cal L}({\cal H})} \lesssim e^{C\,|\,\lambda\,|\,^{5\over4}}.
\end{equation}
\end{theorem}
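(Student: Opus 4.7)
The plan is to deduce the resolvent bound \eqref{th-resolvent} from the global Carleman estimate of Theorem~\ref{th: global Carleman up to boundary} together with the natural dissipation of the Kelvin--Voigt semigroup. Fix $F=(f_1,f_2)\in{\cal H}$ and let $Y=(y,z)\in D({\cal A})$ solve $({\cal A}-i\lambda)Y=F$; the aim is $\|Y\|_{\cal H}\lesssim e^{C|\lambda|^{5/4}}\|F\|_{\cal H}$.

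\textbf{Reduction to a scalar equation.} The first row of $({\cal A}-i\lambda)Y=F$ gives $z=i\lambda y+f_1$. Substituting into the second row, I would rewrite the system as
\[
P(x,D,\lambda)\,y=g_0+\sum_{j=1}^d\partial_{x_j}g_j,\qquad y|_\Gamma=0,
\]
where $g_0=-f_2-i\lambda f_1$ (of $L^2$ norm $\lesssim|\lambda|\,\|F\|_{\cal H}$) and $g_j=a\,\partial_{x_j}f_1$, supported in $\supp a\subset{\cal O}(\supp a)$ and of $L^2$ norm $\lesssim\|F\|_{\cal H}$. This places $y$ squarely in the framework of Theorem~\ref{th: global Carleman up to boundary}.

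\textbf{Dissipation on the damping set.} Pairing $({\cal A}-i\lambda)Y=F$ with $Y$ in ${\cal H}$ and taking real parts yields $\int a|\nabla z|^2=-\Re\langle F,Y\rangle_{\cal H}\le\|F\|_{\cal H}\|Y\|_{\cal H}$. Expanding $\nabla z=i\lambda\nabla y+\nabla f_1$, together with the algebraic identity $\lambda y=-i(z-f_1)$, then produces the two crucial observability bounds
\[
\|\nabla y\|^2_{\{a\ge\delta/2\}}\lesssim \lambda^{-2}\bigl(\|F\|_{\cal H}\|Y\|_{\cal H}+\|F\|_{\cal H}^2\bigr),\qquad \|y\|_\Omega^2\lesssim \lambda^{-2}\bigl(\|Y\|_{\cal H}^2+\|F\|_{\cal H}^2\bigr),
\]
each carrying a decisive $\lambda^{-2}$ gain on the damping region.

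\textbf{Carleman step.} I would pick $\varphi=e^{\gamma\psi}$ as constructed in Remark~\ref{remark-1} and set $\tau=\tilde K|\lambda|^{5/4}$ with $\tilde K$ provided by Theorem~\ref{th: global Carleman up to boundary}. That theorem, applied to $u=y$ (after a standard smooth approximation of $H^1_0(\Omega)$-functions), then yields
\[
\tau\|e^{\tau\varphi}y\|^2_{W_3}+\tau^{-1}\|e^{\tau\varphi}Dy\|^2_{W_3}\lesssim e^{2\tau\Phi}\lambda^2\|F\|_{\cal H}^2+\lambda\|e^{\tau\varphi}y\|^2_{\{a\ge\delta/2\}},
\]
with $\Phi=\max_\Omega\varphi$. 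Feeding in the $L^2$ observability estimate of the previous step, the observation term is then controlled by $e^{2\tau\Phi}\lambda^{-1}(\|Y\|_{\cal H}^2+\|F\|_{\cal H}^2)$.

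\textbf{Closing the loop.} Using $\varphi\ge 1$ on $\Omega$ to strip the left-hand weight, and combining with the local bounds on $\{a\ge\delta/2\}$, yields global estimates for $\|y\|_\Omega^2$ and $\|\nabla y\|_\Omega^2$. The passage back to $\|Y\|_{\cal H}$ will rest on two further identities: the trivial $\|z\|^2\le 2\lambda^2\|y\|^2+2\|f_1\|^2$ coming from $z=i\lambda y+f_1$, and the real part of $\langle Py,y\rangle$ which produces $\|\nabla y\|^2=\lambda^2\|y\|^2+O(\|F\|_{\cal H}\|Y\|_{\cal H}+\|F\|_{\cal H}^2)$. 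Together these reduce the target to controlling $\lambda^2\|y\|_\Omega^2$, and substituting the Carleman output leads to an inequality of the form
\[
\|Y\|_{\cal H}^2\le C\,e^{C|\lambda|^{5/4}}\|F\|_{\cal H}^2+\varepsilon(\lambda)\|Y\|_{\cal H}^2.
\]

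\textbf{Main obstacle.} The crux will be the absorption of $\varepsilon(\lambda)\|Y\|_{\cal H}^2$. The admissibility exponent $5/4$ in the Carleman threshold $\tau\ge\tilde K|\lambda|^{5/4}$ is exactly tuned so that the dissipation gain $\lambda^{-1}$ in the observation term, when combined with the factor $\tau^{-1}=(\tilde K|\lambda|^{5/4})^{-1}$ on the left-hand side, overcomes the exponential amplification $e^{2\tau(\Phi-\min_{W_3}\varphi)}$ produced by unloading the Carleman weight, yielding $\varepsilon(\lambda)<1/2$ for $|\lambda|$ large once $\tilde K$ is chosen appropriately. This delicate balancing is what dictates both the $e^{C|\lambda|^{5/4}}$ form of the resolvent estimate and, via the Batty--Duyckaerts--Burq framework, the $(\log t)^{-4k/5}$ decay rate of Theorem~\ref{th2}.
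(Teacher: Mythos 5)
Your outline follows the paper's scaffolding at the level of section headers (reduce to a scalar equation, obtain dissipation bounds, apply the global Carleman estimate of Theorem~\ref{th: global Carleman up to boundary}, absorb), but the closing step contains a genuine and fatal gap.

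The obstruction is in the observability input that you feed into the Carleman observation term, and in the absorption mechanism you propose. Your two bounds are
\[
\|\nabla y\|^2_{\{a\ge\delta/2\}}\lesssim \lambda^{-2}\bigl(\|F\|_{\cal H}\|Y\|_{\cal H}+\|F\|_{\cal H}^2\bigr),
\qquad
\|y\|^2_\Omega\lesssim \lambda^{-2}\bigl(\|Y\|_{\cal H}^2+\|F\|_{\cal H}^2\bigr).
\]
The second one, which comes from the algebraic relation $\lambda y=-i(z-f_1)$, is circular: since $\|z\|_\Omega\sim|\lambda|\|y\|_\Omega$ (up to data), one has $\|Y\|_{\cal H}^2\gtrsim\lambda^2\|y\|_\Omega^2$, so the inequality reads $\|y\|_\Omega^2\lesssim\|y\|_\Omega^2+\lambda^{-2}\|F\|^2_{\cal H}$ and carries no information. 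Plugging this into the Carleman observation term therefore produces, after stripping the weight, an error of the form $\varepsilon(\lambda)\|Y\|_{\cal H}^2$ with $\varepsilon(\lambda)\sim e^{2\tau(c_2-c_1)}\lambda^{-1}$ and $\tau\sim\tilde K|\lambda|^{5/4}$. This quantity tends to $+\infty$, not to zero: a polynomial gain $\lambda^{-1}$ cannot overcome an exponential amplification $e^{C|\lambda|^{5/4}}$ coming from unloading the Carleman weight (and enlarging $\tilde K$ only makes matters worse). Your ``Main obstacle'' paragraph, which asserts that the exponent $5/4$ is tuned so that $\lambda^{-1}\tau^{-1}$ beats $e^{2\tau(\Phi-\min\varphi)}$, is not correct; the exponent $5/4$ originates in the symbolic remainder estimates (see \eqref{eq: estimate non principally terms}) and has nothing to do with taming the exponential weight.

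What the paper does instead, and what repairs the argument, is to obtain an observability estimate that is \emph{linear} rather than quadratic in the unknown. Testing the first equation of \eqref{eq: resolvent2} against $\chi^2\overline y_1$ (Lemma~\ref{lem: estimate L2 norm by derivative}(i)) gives $\|\chi y_1\|_\Omega\lesssim\|\nabla y_1\|_{\{a>\eta\}}+\|f_1\|_{H^1}+\|f_2\|_\Omega$ — a genuine Poincar\'e-type gain that uses the elliptic PDE, not merely the algebraic relation $z=i\lambda y+f_1$ — while testing against $\overline y_1$ and taking imaginary parts (Lemma~\ref{lem: estimate L2 norm by derivative}(ii)) gives $\int a|\nabla y_1|^2\lesssim\|f_1\|_{H^1}^2+(\|f_1\|+\|f_2\|)\|y_1\|_\Omega$. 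Combined they give
\[
\|y_1\|^2_{H^1(\{a\ge\delta/2\})}\ \lesssim\ \|F\|_{\cal H}^2+\|F\|_{\cal H}\,\|y_1\|_\Omega,
\]
which is linear in $\|y_1\|_\Omega$. Substituting this into the Carleman estimate produces a cross term of the form $e^{c_3\tau}\|F\|_{\cal H}\|y_1\|_\Omega$, and Young's inequality absorbs that at a $\lambda$-independent rate: $e^{c_3\tau}\|F\|_{\cal H}\|y_1\|_\Omega\le\tfrac12\|y_1\|_\Omega^2+\tfrac12 e^{2c_3\tau}\|F\|_{\cal H}^2$. The exponential ends up multiplying $\|F\|_{\cal H}^2$, where it is harmless, and the $\tfrac12\|y_1\|_\Omega^2$ is swallowed by the Carleman left-hand side. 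You should replace your $L^2$ observability bound with Lemma~\ref{lem: estimate L2 norm by derivative}(i) (or a proof of it) and recast the absorption as a Young-inequality treatment of a $\|F\|\cdot\|y_1\|$ cross term rather than as a smallness argument on $\varepsilon(\lambda)\|Y\|_{\cal H}^2$.
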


%
%

\indent
Let $\lambda$ be a real number such that $\,|\,\lambda\,|\,$ is large enough.
Consider the resolvent equation:
\begin{equation}
\label{eq: resolvent1}
F = ({\cal A}- i\lambda)Y,  \quad \;  \mbox{where   }  \; Y = \big( y_1 ,\,y_2 \big) \in D({\cal A}), \quad F=\big( f_1 ,\,f_2 \big)\in \cal{H},
\end{equation}
which yields
\begin{equation}
\label{eq: resolvent2}
\left\{
\begin{array}{ll}
 \Div\,(\nabla y_1 + a \nabla y_2)   +\lambda^2 y_1  = i\lambda f_1+ f_2, & \;  \mbox{  in }  \; \Omega,
\\ \noalign{\medskip}
 y_2 = i\lambda y_1+f_1 & \;  \mbox{  in }  \; \Omega,
\\ \noalign{\medskip}
y_1\,|\,_{\Gamma} =0
\end{array}
\right.
\end{equation}

\noindent
 In what follows, we shall prove that there exists a positive constant $C$ such that
\begin{align*}
\| (y_1,\,y_2) \|_{\cal H}  \lesssim
e^{C\,|\,\lambda\,|\,^{5\over4}} \| (f_1,\,f_2 )\|_{\cal H}.
\end{align*}

First, let $\eta>0$ and  $\chi\in \Con_0^\infty(\Omega)$ be real valued function  such that $\supp \,\chi = \{x\in\Omega\,:\, a(x)>\eta\})$
and $\chi \equiv 1$ in $\{x\in \Omega\,:\, a(x)>2\eta\}$. The following lemma is helpful.

\begin{lemma}
	\label{lem: estimate L2 norm by derivative}
For $y_1,\; f_1,\; f_2$  satisfying \eqref{eq: resolvent2} and $|\lambda|$ large enough, it holds
\begin{enumerate}
\item[{\rm (i)}] $\| \chi y_1 \|_\Omega \lesssim \| \nabla y_1\|_{\{x\in\Omega\,:\, a(x)>\eta\}}+\| f_1\|_{H^1(\Omega)}+\| f_2\|_{\Omega}.
$
\item[{\rm (ii)}] $\displaystyle \int_{\Omega} a(x) |\nabla y_1|^2   dx \lesssim \| f_1\|_{H^1(\Omega)}^2
+ (\| f_1\|_\Omega+\| f_2\|_\Omega)\| y_1\|_\Omega .$
\end{enumerate}
 \end{lemma}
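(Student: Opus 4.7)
My plan is to establish (ii) first by a direct energy identity, then use it to guide the harder proof of (i); both estimates come from testing a suitable form of the resolvent system \eqref{eq: resolvent2} against an appropriate multiplier and extracting the imaginary part.

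For (ii), substituting $y_2=i\lambda y_1+f_1$ into the first equation of \eqref{eq: resolvent2} shows that $y_1\in H^1_0(\Omega)$ satisfies
\[
\Delta y_1+i\lambda\nabla\!\cdot\!(a\nabla y_1)+\lambda^2 y_1=f_2+i\lambda f_1-\nabla\!\cdot\!(a\nabla f_1).
\]
I would multiply by $\bar y_1$, integrate over $\Omega$, and integrate by parts; taking the imaginary part of the resulting identity yields
\[
\lambda\int_\Omega a|\nabla y_1|^2\,dx = -\lambda\,\Re\!\!\int\! f_1\bar y_1 - \Im\!\!\int\! f_2\bar y_1 - \Im\!\!\int\! a\,\nabla f_1\!\cdot\!\nabla\bar y_1.
\]
The first two terms are bounded by $(|\lambda|\|f_1\|+\|f_2\|)\|y_1\|$. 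For the last one, Cauchy--Schwarz with the weight $\sqrt a$ followed by Young's inequality produces a quantity $(|\lambda|/2)\int a|\nabla y_1|^2$ that can be absorbed into the left-hand side, leaving a remainder of size $\|\nabla f_1\|^2/|\lambda|$. Dividing by $|\lambda|$ and using $|\lambda|\ge 1$ then gives (ii).

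For (i), the algebraic identity $i\lambda y_1=y_2-f_1$ from the second line of \eqref{eq: resolvent2} immediately yields $|\lambda|\|\chi y_1\|_\Omega\le \|\chi y_2\|_\Omega+\|f_1\|_\Omega$, reducing the problem to the estimation of $\|\chi y_2\|_\Omega$. To this end I would rewrite the first equation of \eqref{eq: resolvent2} in its equivalent form $\Delta y_1+\nabla\!\cdot\!(a\nabla y_2)-i\lambda y_2=f_2$, test against $\chi^2\bar y_2$, and integrate by parts (all boundary contributions vanish because $\chi\in\Con_0^\infty(\Omega)$). Substituting $\nabla\bar y_2=-i\lambda\nabla\bar y_1+\nabla\bar f_1$ in the integral $\int\chi^2\nabla y_1\cdot\nabla\bar y_2$ generates the term $i\lambda\int\chi^2|\nabla y_1|^2$, and taking the imaginary part (which eliminates the real positive contribution $\int\chi^2 a|\nabla y_2|^2$) yields
\[
\lambda\|\chi y_2\|^2 = \lambda\!\!\int\!\chi^2|\nabla y_1|^2 - \Im\Bigl\{\!\int\!\chi^2 f_2\bar y_2+\!\int\!\chi^2\nabla y_1\!\cdot\!\nabla\bar f_1+2\!\int\!\chi\bar y_2\,\nabla y_1\!\cdot\!\nabla\chi+2\!\int\!\chi a\bar y_2\,\nabla y_2\!\cdot\!\nabla\chi\Bigr\}.
\]

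After dividing by $|\lambda|$, the leading term is bounded by $\|\nabla y_1\|^2_{\{a>\eta\}}$ since $\supp\chi\subset\{a>\eta\}$, and each cross term is estimated by Cauchy--Schwarz. The main obstacle is the last one: substituting $\nabla y_2=i\lambda\nabla y_1+\nabla f_1$ produces a contribution of size $\|\chi y_2\|\,\|\nabla y_1\|_{\{a>\eta\}}$ without any $1/|\lambda|$ factor. Young's inequality then lets me absorb a small multiple of $\|\chi y_2\|^2$ into the left-hand side, producing the estimate $\|\chi y_2\|\lesssim\|\nabla y_1\|_{\{a>\eta\}}+(\|f_1\|_{H^1}+\|f_2\|)/|\lambda|$. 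Substituting into the reduction step and using $|\lambda|\ge 1$ completes the proof of (i).
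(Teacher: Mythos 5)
Your proof of (ii) is essentially identical to the paper's: multiply the first line of \eqref{eq: resolvent2} by $\overline{y_1}$, integrate by parts, take the imaginary part, and absorb the weighted gradient of $f_1$ by Cauchy--Schwarz with weight $\sqrt{a}$. The only differences are cosmetic (you keep $f_1$ and $f_2$ separate where the paper writes $i\lambda f_1+f_2$ at once).

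Your proof of (i) is correct but takes a genuinely different route. The paper multiplies the first line of \eqref{eq: resolvent2} directly by $\chi^2\overline{y_1}$; the coefficient $\lambda^2$ in the equation then hands over $\lambda^2\|\chi y_1\|_\Omega^2$ on the spot, and the real structure of the resulting identity \eqref{eq: q1} is estimated term by term, with the absorption happening against $\lambda^2\|\chi y_1\|_\Omega^2$ itself. You instead begin from the algebraic relation $i\lambda y_1=y_2-f_1$ to reduce matters to estimating $\|\chi y_2\|_\Omega$, rewrite the first equation in the form $\Delta y_1+\nabla\!\cdot(a\nabla y_2)-i\lambda y_2=f_2$, test against $\chi^2\overline{y_2}$, and use the \emph{imaginary} part to eliminate the positive term $\int\chi^2 a|\nabla y_2|^2$ -- mirroring the mechanism used in (ii). Your route is slightly longer (two steps rather than one multiplier identity) but is conceptually tidy in that both parts of the lemma flow from an imaginary-part argument, and it displays the extra factor $1/|\lambda|$ in $\|\chi y_1\|_\Omega$ transparently. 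Either way the cut-off terms $\nabla\chi$ are supported in $\{a>\eta\}$ so the gradient of $y_1$ is only ever seen on $\{a>\eta\}$, which is what the statement requires; and the one term without $\lambda$-decay that you flag is correctly handled by Young's inequality. The proposal is sound.
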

\begin{proof}
(i)~ Multiplying the first equation in \eqref{eq: resolvent2} by $\chi^2 \overline y_1$  and using $y_2 = i\lambda y_1+f_1 $,  we obtain,
\begin{equation}\label{eq: q1}\begin{array}{lcl}
\lambda^2 \|\chi y_1\|_\Omega^2 & =& \displaystyle
 \int_\Omega \nabla y_1\cdot \nabla( \chi^2  \overline y_1) dx + i\lambda  \int_\Omega a(x) \nabla y_1\cdot \nabla(   \chi^2  \overline y_1) dx
\\ \noalign{\medskip} && \displaystyle
      + i\lambda \int_\Omega  f_1 \chi^2 \overline y_1 dx + \int_\Omega  f_2\chi^2 \overline y_1 dx
+\int_\Omega a(x) \nabla f_1\cdot \nabla(\chi^2 \overline y_1)dx.
\end{array}\end{equation}
 Since $ \nabla( \chi^2  \overline y_1) = \chi^2 \nabla  \overline y_1   + 2 \overline y_1\chi \nabla\chi$, we have
\begin{equation}\label{eq: q3}\begin{array}{ll}\displaystyle
\,\Big|\,  \int_\Omega a(x) \nabla y_1 \cdot\nabla(   \chi^2  \overline y_1) dx    \,\Big|\,& \displaystyle
\lesssim   \| \nabla y_1\|_{\{x\in\Omega\,:\, a(x)>\eta\}}^2
+\| \chi y_1 \|_\Omega \| \nabla y_1\|_{\{x\in\Omega\,:\, a(x)>\eta\}}
\\ \noalign{\medskip} &\le  \displaystyle
 2\| \nabla y_1\|_{\{x\in\Omega\,:\, a(x)>\eta\}}^2
+{1\over4}\| \chi y_1 \|_\Omega^2  .
\end{array}\end{equation}

\noindent
By the similar argument, we can deal with the rest terms on the right hand side of \eqref{eq: q1}. Combining these with  \eqref{eq: q1},  \eqref{eq: q3} yields
\begin{equation}\label{eq: q2}\begin{array}{lcl}
\lambda^2 \|\chi y_1\|_\Omega^2&\lesssim &\displaystyle {1\over 2}\lambda^2 \| \chi y_1 \|_\Omega^2+ ( |\lambda|+1) \| \nabla y_1\|_{\{x\in\Omega\,:\, a(x)>\eta\}}^2
\\ \noalign{\medskip}& & \displaystyle
   + \| f_1\|_\Omega^2+\lambda^{-2}\| f_2\|_\Omega^2+ (\lambda^{-2}+1) \| \nabla f_1\|^2_{H^1(\Omega)}.
\end{array}\end{equation}
Then, (i) is reached.

(ii)~ Multiplying the first equation in \eqref{eq: resolvent2} by $  \overline y_1$  and using $y_2 = i\lambda y_1+f_1 $,  we obtain,
\begin{align}\label{eq: q41}
\int_{\Omega}[-(i\lambda a(x) +1) |\nabla y_1|^2    +\lambda^2 |y_1|^2 ]dx
 =  \int_{\Omega}[a(x)\nabla f_1\cdot\nabla\overline{y}_1+( i\lambda f_1+  f_2) \overline{y}_1]dx.
\end{align}
Taking the imaginary part of \eqref{eq: q41} yields
\begin{align}\label{eq: q5}
\int_{\Omega} a(x) |\nabla y_1|^2   dx = - \lambda^{-1} \Im \int_{\Omega} [a(x)\nabla f_1\cdot\nabla\overline{y_1}
+( i\lambda f_1+  f_2) \overline{y_1}]dx.
\end{align}
Thus, by the Cauchy-Schwarz inequality, one can conclude from \eqref{eq: q5} that
\begin{equation*}\begin{array}{ll}\displaystyle
\int_{\Omega} a(x) |\nabla y_1|^2   dx & \le \displaystyle |\lambda\,|^{-1}\,\Big( \int_{\Omega} a(x) |\nabla y_1|^2   dx\Big)^{1\over2}
\| f_1\|_{H^1(\Omega)}+  ( \| f_1\|_{\Omega}+|\lambda\,|^{-1}\,\| f_2\|_{\Omega})\| y_1\|_{\Omega}
\\ \noalign{\medskip} & \le \displaystyle
{1\over4} \int_{\Omega} a(x) |\nabla y_1|^2   dx
+
|\lambda\,|^{-2}\,
\| f_1\|^2_{H^1(\Omega)}+ ( \| f_1\|_{\Omega}+|\lambda\,|^{-1}\,\| f_2\|_{\Omega})\| y_1\|_{\Omega}.
\end{array}
\end{equation*}
The proof of lemma is finished.
\end{proof}

{\bf Proof  of Theorem~\ref{th1}.}
Due to Lemma~\ref{lem: estimate L2 norm by derivative} (i) and (ii), we have
\begin{align}\label{eq: q6}
\| y_1 \|_{\{x\in\Omega\,:\, a(x)>2\eta\}}^2 \lesssim \| \nabla y_1\|_{ \{x\in\Omega\,:\, a(x)>\eta\}}^2
+\| f_1\|_{H^1(\Omega)}^2+\| f_2\|_{\Omega}^2,
\end{align}
and\begin{align}\label{eq: q7}
\|  \nabla y_1 \|_{ \{x\in\Omega\,:\, a(x)>\eta\}}^2  \lesssim \| f_1\|_{H^1(\Omega)}^2
+ (\| f_1\|_{\Omega}+\| f_2\|_{\Omega})\| y_1\|_{\Omega}   .
\end{align}
Combining \eqref{eq: q6} and \eqref{eq: q7} yields
\begin{align}
	\label{est: estimate H1 norm on a positive}
\|  y_1 \|_{H^1(\{x\in\Omega\,:\, a(x)>2\eta\})}^2  \lesssim \| f_1\|_{H^1(\Omega)}^2+\| f_2\|_{L^2(\Omega)}^2
+ (\| f_1\|_{\Omega}+\| f_2\|_{\Omega})\| y_1\|_{\Omega}  .
\end{align}

\noindent
On the other hand, by \eqref{eq: resolvent2}, one has that
  $y_1$ satisfies
\begin{align}\label{eq: y1}
 \Div\,(\nabla y_1 +i\lambda a(x) \nabla y_1)   +\lambda^2 y_1  = i\lambda f_1+ f_2-\Div (a(x) \nabla f_1) .
\end{align}
Then, applying Theorem~\ref{th: global Carleman up to boundary} to $y_1$ satisfying \eqref{eq: y1},
we obtain
\begin{equation*}\begin{array}{ll}
 &\displaystyle \tau\|e^{\tau \varphi} y_1 \|_{\Omega}^2+\tau^{-1}\| e^{\tau \varphi} \nabla y_1 \|_{\Omega}^2
 \\ \noalign{\medskip}  \lesssim&\displaystyle
 \lambda^2  \| e^{\tau \varphi}  f_1  \|_{\Omega}^2 + \| e^{\tau \varphi}f_2 \|_{\Omega}^2 +
  \sum\limits_{j=1}^d  \| e^{\tau \varphi}a\,\d_{x_j}f_1 \|_{\Omega}^2
 \\ \noalign{\medskip} & \displaystyle
+ (\lambda+\tau ) \| e^{\tau \varphi}  y_1\|^2_{   \{ x\in\Omega\,:\,  a(x)\ge \delta/2 \}  }
+\tau^{-1}\| e^{\tau \varphi} \nabla y_1\|^2_{   \{ x\in\Omega\,:\,  a(x)\ge \delta/2 \}  }.
\end{array}
\end{equation*}
Let $c_1=\min _{x\in\Omega}\varphi(x)$ and $c_2=\max_{x\in\Omega}\varphi(x)$. We conclude from the above inequality and $\tau\ge  \max\{\widetilde K |\lambda|^{5\over4},\,1\}$ that
\begin{equation}\label{eq: q4}\begin{array}{ll}
&  \quad \tau  e^{2 c_1\tau }\| y_1 \|_{\Omega} ^2+\tau^{-1} e^{2 c_1\tau } \|   \nabla y_1 \|_{\Omega}^2
\\ \noalign{\medskip} \lesssim&
 \lambda^2 e^{2 c_2 \tau }   \|   f_1  \|_{H^1(\Omega)}^2 + e^{2 c_2 \tau }    \|f_2 \|_{\Omega}^2
+ \tau e^{2 c_2 \tau }     \|   y_1\|^2_{H^1(  \{ x\in\Omega\,:\,  a(x)\ge \delta/2 \}  )} .
\end{array}\end{equation}
 Setting $\eta={\delta\over 8}$ and substituting \eqref{est: estimate H1 norm on a positive}   into \eqref{eq: q4}, we obtain
\begin{align*}
\tau e^{2 c_1\tau } \| y_1 \|_{\Omega} ^2+\tau^{-1} e^{2 c_1\tau } \| \nabla y_1 \|_{\Omega}^2  \lesssim
 \tau^2 e^{2 c_2 \tau }   \|   f_1  \|_{H^1(\Omega)}^2 + \tau e^{2 c_2 \tau }    \|f_2 \|_{\Omega}^2
+\tau e^{2 c_2 \tau }  (\| f_1\|_{\Omega}+\| f_2\|_{\Omega})\| y_1\|_{\Omega}     .
\end{align*}
Let $c_3=2(c_2-c_1)+1$.  For  $\tau\ge  \max\{\widetilde K |\lambda|^{5\over4},\,1\}$, one has
\begin{align*}
\| y_1 \|_{\Omega} ^2+ \|  \nabla y_1 \|_{\Omega}^2  \lesssim
 e^{c_3 \tau }   \|   f_1  \|_{H^1(\Omega)}^2 + e^{ c_3 \tau }    \|f_2 \|_{\Omega}^2
+ e^{ c_3 \tau }  (\| f_1\|_{\Omega}+\| f_2\|_{\Omega})\| y_1\|_{\Omega}     .
\end{align*}
For any $\varepsilon>0$, using $ e^{ c_3 \tau }  (\| f_1\|_{\Omega}+\| f_2\|_{\Omega})\| y_1\|_{\Omega}    \le \varepsilon \| y_1\|_{\Omega}^2 +{1\over4} \varepsilon^{-1} e^{2 c_3 \tau }
(\| f_1\|_{\Omega}+\| f_2\|_{\Omega})^2$ in the above estimate, we conclude that
\begin{align*}
\| y_1 \|_{\Omega} ^2+ \|   \nabla y_1 \|_{\Omega}^2  \lesssim
 e^{2c_3 \tau } \big(  \|   f_1  \|_{H^1(\Omega)}^2 +    \|f_2 \|_{\Omega}^2\big)    ,
\end{align*}
which gives the desired result.
\hfill $\Box$

%
%
\appendix

%
%
%
%

\end{document}